\newtheorem{thm}{Theorem}[section]
\newtheorem{defi}[thm]{Definition}
\newtheorem{prop}[thm]{Proposition}
\newtheorem{lem}[thm]{Lemma}
\numberwithin{equation}{section}
\newcommand\nconnect{
  \mathrel{\ooalign{$\longleftrightarrow$\cr
    \hskip 0pt plus 0.8fill $\arrownot$ \hskip 0pt plus 2fill\cr}}
}
\newcommand\oset{\big\{\,}
\newcommand\cset{\,\big\}}
\newcommand{\fk}[3]{\Phi^{\, #1}_{#2} \big[ #3 \big] } 
\newcommand{\mfk}[3]{\Phi^{\, #1}_{#2} \left[ #3 \right] }
\newcommand{\cfk}[4]{\Phi^{\, #1}_{#2} \big[ #3 \bigm\arrowvert #4\, \big]}
\begin{document}
\title{The localisation of low-temperature interfaces in $d$ dimensional Ising model}
\author{Wei Zhou\footnote{
\noindent
D\'epartement de math\'ematiques et applications, Ecole Normale Sup\'erieure,
CNRS, PSL Research University, 75005 Paris.
\newline
Laboratoire de Math\'ematiques d'Orsay, Universit\'e Paris-Sud, CNRS, Universit\'e
Paris--Saclay, 91405 Orsay.}
}
\maketitle
\begin{abstract}
We study the Ising model in a box $\Lambda$ in $\mathbb{Z}^d$ (not necessarily parallel to the directions of the lattice) with Dobrushin boundary conditions at low temperature. We couple the spin configuration with the configurations under $+$ and $-$ boundary conditions and we define the interface as the edges whose endpoints have the same spins in the $+$ and $-$ configurations but different spins with the Dobrushin boundary conditions. We prove that, inside the box $\Lambda$, the interface is localized within a distance of order $\ln^2|\Lambda|$ of the set of the edges which are connected to the top by a $+$ path and connected to the bottom by a $-$ path.
\end{abstract}
\section{Introduction}
At the macroscopic level, the dynamics of the interface between two pure phases in the Ising model seem to be deterministic. In fact, the interface tends to minimize the surface tension between the two phases. The microscopic justification of this fact in the context of 2D Ising model was achieved in \cite{MR1181197}. In the limit where the size of the system grows to infinity, the two types of spins, at low temperature, form two regions separated by the interfaces. After a suitable spatial rescaling, these interfaces converge to deterministic shapes. However, the interfaces remain random and their geometric structure is extremely complex. In two dimensions, the fluctuations of the interfaces are well analysed in \cite{DH} using the cluster expansions techniques. Recently, Ioffe and Velenik gave a geometric description of the interfaces and their scaling limits with the help of the Ornstein–Zernike theory in \cite{IV18}. In higher dimensions, the famous result of Dobrushin in \cite{D72Gibbs} says that at low temperature, the interface in a straight box is localised around the middle hyperplane of the box when the temperature is low. One of the difficulties to study the interfaces is to define them properly. The usual way is to consider the Dobrushin boundary conditions. More precisely, the vertices on the upper boundary of the box are pluses and those on the lower boundary are minuses. It is a geometric fact that, with such a boundary condition, the spin configurations present an interface separating a region of plus spins containing the upper boundary and a region of minus spins containing the lower boundary. However, for several reasons, it is still not obvious to define an interface in this setting. For example, there are more than one separating set between the pluses and minuses in a typical configuration with Dobrushin boundary conditions.
\vspace{-0.5cm}
\paragraph{Remark.}While I was finalizing this paper, Gheissari and Lubetzky completed a very interesting paper \cite{1901.04980} on the large deviations of the interface in 3D Ising model. They study the height of the interface in a straight box using a decomposition of the pillars and they obtain a localisation result at an order $\ln|\Lambda|$ at low temperature. Our localisation result is clearly weaker in the case of a straight box, however it holds also for a tilted box.

The first goal of this study is to adapt to the Ising model the definition of the interfaces, introduced in \cite{new} for the percolation model. 
The second goal is to progress in the geometric description of these interfaces for a box not necessarily straight in dimensions $d\geqslant 2$ at low temperature. In \cite{new}, we constructed a coupling between the dynamical percolation process and a conditioned process. The interface was defined as the difference between the two processes. We showed a localisation result for the interface around the pivotal edges for the disconnection event. For the Ising model, a coupling can be realised by the Glauber dynamics, yet there is no corresponding notion for the pivotal edges. However, the objects introduced for the percolation model in \cite{new} are defined naturally for the FK-percolation model. With the help of the Edwards-Sokal coupling, we can define and localise the interfaces using the results obtained in the FK-percolation model. To realise our first goal, we construct three spin configurations $(\sigma^+,\sigma^-,\sigma^D)$, corresponding to the plus, minus and Dobrushin boundary conditions, and a probability measure $\pi_{\Lambda,\beta}$ on this triplet, whose marginals are the Ising measures with the corresponding boundary conditions. We consider a box $\Lambda = (V,E)$ and we define the interface as follows:
\begin{defi}
The set $\mathcal{P}_I$ is the set of the edges $\langle x,y\rangle\in E$ such that 
$$\begin{array}{l}
\sigma^D(x) = +1\text{ and }x\text{ is connected to }T\text{ by a path of vertices with }+1\\
\sigma^D(y) = -1\text{ and }y\text{ is connected to }B\text{ by a path of vertices with }-1.
\end{array}
$$
The set $\mathcal{I}_I$ is the set of the edges $\langle x,y\rangle\in E$ such that
$$\sigma^{+}(x) = \sigma^{+}(y),\quad \sigma^-(x)=\sigma^-(y), \quad \sigma^D(x)\neq\sigma^D(y). 
$$
\end{defi}
\noindent The interface $\mathcal{I}_I$ is the set of the edges whose endpoints have different spins in $\sigma^D$ but have the same spins in the other two configurations. The set $\mathcal{P}_I$ corresponds to the edges of $\mathcal{I}_I$ connected to the boundary in $\sigma^D$. As for the second goal, we show the following result:
\begin{thm}\label{fk.main2}
There exist $0<\tilde{\beta}<\infty$ and $\kappa\geqslant 0$, such that for $\beta\geqslant \tilde{\beta}$, $c>0$ and any $\Lambda$ such that $|\Lambda|\geqslant \max\{3^{6d},(cd)^{cd^2}\}$, we have 
$$\pi_{\Lambda,\beta}\Big(\exists e \in \mathcal{I}_I,d(e,\Lambda^c\cup \mathcal{P}_I)\geqslant \kappa c^2 \ln^2|\Lambda|\Big)\leqslant \frac{1}{|\Lambda|^c}.
$$
\end{thm}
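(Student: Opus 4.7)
The plan is to reduce the statement to an analogous localisation result for FK-percolation via the Edwards--Sokal coupling, along the lines suggested in the introduction of the paper.

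First, I would construct $\pi_{\Lambda,\beta}$ as follows. Sample three FK-percolation configurations $(\omega^+,\omega^-,\omega^D)$ on $\Lambda$ at parameters corresponding to the Ising model at inverse temperature $\beta$, with plus, minus and Dobrushin boundary conditions respectively, under the monotone coupling developed for the FK setting in \cite{new}. Then apply the Edwards--Sokal procedure to each configuration: clusters touching the prescribed boundary take the forced sign, other clusters receive an independent uniform sign. This yields $(\sigma^+,\sigma^-,\sigma^D)$ with the correct Ising marginal laws and defines $\pi_{\Lambda,\beta}$.

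Next, I would introduce the FK-percolation analogues $(\mathcal{I}_{FK},\mathcal{P}_{FK})$ of the interface and pivotal set, defined in terms of connection events in the triple $(\omega^+,\omega^-,\omega^D)$ in analogy with \cite{new}, and establish two key relations. First, every FK-pivotal edge for the top-bottom disconnection in $\omega^D$ belongs to $\mathcal{P}_I$: if $x$ is connected to $T$ and $y$ to $B$ in $\omega^D$, the Dobrushin-forced colouring automatically produces $\sigma^D$-monochromatic paths from $x$ to $T$ and from $y$ to $B$. Second, on a good event of high probability, every edge of $\mathcal{I}_I$ lies close to some edge of $\mathcal{I}_{FK}$: the condition $\sigma^D(x)\neq\sigma^D(y)$ forces $x$ and $y$ into distinct $\omega^D$-clusters which, on this good event, touch $T$ and $B$ respectively. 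The third step is then to invoke the FK-percolation localisation, the natural extension to FK of the main result of \cite{new}: with probability at least $1-|\Lambda|^{-c}$, every edge of $\mathcal{I}_{FK}$ lies within distance $\kappa c^2\ln^2|\Lambda|$ of $\Lambda^c\cup\mathcal{P}_{FK}$. Combined with the two relations above, this yields the theorem.

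The main obstacle is the control of the Edwards--Sokal colouring randomness in the second step. Concretely, one has to rule out, with probability at least $1-|\Lambda|^{-c}$, the appearance within a $\ln^2|\Lambda|$-neighbourhood of the interface of any anomalous interior cluster of $\omega^+$ or $\omega^-$ whose random sign could spuriously fulfil the definition of $\mathcal{I}_I$ without corresponding to an FK-interface edge. I would handle this by combining the FK-localisation itself (to restrict attention to a thin tube around the interface) with the exponential decay of FK-cluster radii at low temperature (to bound the number and size of such anomalous clusters), followed by a polynomial union bound over the restricted region. The underlying FK-percolation localisation is itself a substantial undertaking, since transferring the pivotal-based coupling argument of \cite{new} from independent Bernoulli dynamical percolation to FK requires building the three-configuration monotone coupling under distinct boundary conditions and replacing the product structure by the exponential FK-decay; but once that is available, the passage to Ising reduces to the two inclusions and the anomalous-cluster control just described.
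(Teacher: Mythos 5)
Your proposed construction of $\pi_{\Lambda,\beta}$ is genuinely different from the paper's, and the difference is not cosmetic: it opens a gap that your sketch does not close. The paper does not sample three FK configurations $(\omega^+,\omega^-,\omega^D)$ under three different boundary conditions and then colour each independently. It takes the stationary pair $(\omega,\omega')$ of the coupled Gibbs sampler (with $\omega\geqslant\omega'$, $\omega'$ conditioned on $\{T\nconnect B\}$), colours $\omega'$ by Edwards--Sokal with Dobrushin boundary to produce $\sigma^D$, and then colours $\omega$ \emph{coherently} with $\sigma^D$: any $\omega$-cluster that is also an $\omega'$-cluster keeps its $\sigma^D$-spin in $\sigma^+$ and $\sigma^-$, and every interior $\omega$-cluster receives a single random sign used for both $\sigma^+$ and $\sigma^-$. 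This coherence is the load-bearing feature: it forces $\sigma^+(x)=\sigma^-(x)=\sigma^D(x)$ whenever $x$ sits in an interior $\omega$-cluster that equals its $\omega'$-cluster, so that $\sigma^+(x)\neq\sigma^D(x)$ can only happen when the $\omega'$-cluster of $x$ has an edge of $\mathcal{I}=\{e:\omega(e)\neq\omega'(e)\}$ on its boundary (or $x$ connects to $\partial\Lambda$). That deterministic implication is what links $\mathcal{I}_I$ to the FK interface $\mathcal{I}$.

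With your independent colourings this implication fails. Two adjacent interior clusters of $\omega^D$ disagree in $\sigma^D$ with probability $1/2$, and the corresponding interior clusters of $\omega^+$, $\omega^-$ agree across the same bond with a fixed positive probability, all independently of the FK geometry. These spurious coincidences populate $\mathcal{I}_I$ throughout the box and are in no way tied to $\mathcal{I}_{FK}$ or $\mathcal{P}_I$. Your proposed fix --- restricting to a tube around the FK interface and invoking exponential decay of cluster radii plus a union bound --- does not address this: the problem is not that some anomalous cluster is large, but that ordinary small clusters receive incompatible random signs with probability of order one, and there are polynomially many such adjacent cluster pairs even inside a thin tube. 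You would need to prove that these spurious edges are all close to $\mathcal{P}_I$, which is not obviously true and is precisely what the coherent coupling lets one avoid having to prove.

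A secondary gap: even granting that every $e\in\mathcal{I}_I$ has an endpoint whose $\omega'$-cluster $C_x$ carries an FK-interface edge $f$ on its boundary, $f$ need not be close to $e$. The paper handles this by a second case analysis: when $f$ is within $\kappa'c^2\ln|\Lambda|$ of $\Lambda^c\cup\mathcal{P}$ but $e$ is far, the cluster $C_x$ must have diameter of order $\ln^2|\Lambda|$, and one kills this with a Peierls-type bound on the $*$-connected outer boundary $\partial_e C_x$, re-using the exploration argument of lemma~\ref{fk.fbk} to extract a half of $\partial_e C_x$ disjoint from a cut. Your sketch collapses this into ``$\mathcal{I}_I$ lies close to $\mathcal{I}_{FK}$'' without supplying the diameter control that makes the statement true.
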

We call a cut in a spin configuration a set of edges $e = \langle x,y\rangle$ separating $T$ and $B$ such that 
$$ \sigma^D(x) \neq \sigma^D(y).
$$
Using the same method, we show that, under the probability $\pi$, a vertex separated from $B$ by a cut and which is far from this cut has the same spin in $\sigma^+$ and $\sigma^D$, more precisely, we have:
\begin{thm}\label{fk.main3}
There exist $0<\tilde{\beta}<\infty$ and $\kappa\geqslant 0$, such that for $\beta\geqslant \tilde{\beta}$, $c>0$ and any $\Lambda$ such that $|\Lambda|\geqslant \max\{3^{6d},(cd)^{cd^2}\}$, we have 
$$\pi_{\beta}\left(\begin{array}{c}
\exists x\in\Lambda\quad \sigma^+(x) = +1,\quad \sigma^D(x) = -1\\
\exists C \text{ a cut separating }x\text{ from }B\\
d(x,C)\geqslant \kappa c^2\ln^2|\Lambda|
\end{array}\right)\leqslant \frac{1}{|\Lambda|^c},$$
and 
$$\pi_{\beta}\left(\begin{array}{c}
\exists x\in\Lambda\quad \sigma^+(x) = -1,\quad \sigma^D(x) = +1\\
\exists C \text{ a cut separating }x\text{ from }T\\
d(x,C)\geqslant \kappa c^2\ln^2|\Lambda|
\end{array}\right)\leqslant \frac{1}{|\Lambda|^c}.
$$
\end{thm}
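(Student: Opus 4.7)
The plan is to follow the strategy developed for Theorem~\ref{fk.main2}: transfer the event to the FK-percolation model via the Edwards--Sokal coupling used in the construction of $\pi_{\Lambda,\beta}$, and apply the same FK-localisation input from \cite{new}. Set $R=\kappa c^2\ln^2|\Lambda|$. The probabilistic target is to show that for each fixed $x\in\Lambda$ the event has probability at most $|\Lambda|^{-c-1}$, so that a union bound over $x$ yields the stated bound.

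First I would extract the geometric content of the event. Assume $\sigma^D(x)=-1$, $\sigma^+(x)=+1$, and let $C$ be a cut separating $x$ from $B$ with $d(x,C)\geqslant R$. The edges of $C$ are $\sigma^D$-differing, hence closed in the Edwards--Sokal representation of $\sigma^D$; consequently no FK-open $\sigma^D$-path from $x$ can cross $C$, and since $C$ separates $x$ from $B$ in $\Lambda$, $x$ is not FK-connected to $B$ in the $\sigma^D$-configuration. As $\sigma^D(x)=-1$ it is also not connected to $T$. In the $\sigma^+$-configuration, on the other hand, $x$ is FK-connected to the $+$-boundary. The two FK-configurations coupled by $\pi_{\Lambda,\beta}$ must therefore disagree on an edge set realising the connection of $x$ to the boundary in the $\sigma^+$-configuration.

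The key geometric input is the observation that $\mathcal{P}_I\subseteq C$ for every $T$-$B$ cut $C$: any $\langle u,v\rangle\in\mathcal{P}_I$ joins the $+$-cluster of $T$ to the $-$-cluster of $B$, and the monochromatic $+$-path $T\to u$ and $-$-path $v\to B$ contain no cut edges (they are same-sign), so $\langle u,v\rangle$ itself must be in $C$. In particular $d(x,\mathcal{P}_I)\geqslant d(x,C)\geqslant R$, so the FK-disagreement witnessed above is anchored at a vertex $x$ lying at distance at least $R$ from $\mathcal{P}_I$. Combining this with a separate treatment of the distance from $x$ to $\Lambda^c$, the FK-localisation estimate from \cite{new} which drives Theorem~\ref{fk.main2} bounds such a disagreement by $\exp(-c'R/\ln R)\leqslant|\Lambda|^{-c-1}$ at $\beta\geqslant\tilde{\beta}$. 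A union bound over $x\in\Lambda$ gives the first inequality, and the second follows from the $\pm$-symmetry of the Dobrushin construction under exchange of $T$ and $B$.

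The main obstacle will be the handling of vertices $x$ close to $\Lambda^c$, and more generally the clean extraction of an FK-disagreement surface of diameter at least $R$ to which the estimate from \cite{new} directly applies. Near the top face $T$ or the side faces, the FK-disagreement can be anchored on a small local bubble, and one must either enlarge the disagreement structure back to diameter $R$ using the cut $C$ or treat these boundary-layer contributions by a separate Peierls estimate. Once this reduction is in place, the probabilistic half of the argument is a direct transcription of the corresponding step in the proof of Theorem~\ref{fk.main2}.
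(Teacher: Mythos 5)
Your reduction to the FK model via the Edwards--Sokal coupling is the right framework, and your observation that $\mathcal{P}_I\subseteq C$ for every $\sigma^D$-cut $C$ (because the monochromatic witnesses for $\mathcal{P}_I$ avoid all $\sigma^D$-differing edges) is correct and is a nice clean fact; the paper does not invoke it, but it would serve as an alternative way to bound $d(x,\mathcal{P})$ since the paper establishes $\mathcal{P}\subseteq\mathcal{P}_I$. However, there is a genuine gap in the core reduction. You assert that because $\sigma^+(x)=+1$, ``$x$ is FK-connected to the $+$-boundary'' in $\omega$, and from there extract a disagreement surface ``realising the connection of $x$ to the boundary.'' This is false in general: in the coupling, a bulk $\omega$-cluster $D_x$ not touching $\partial\Lambda$ receives a random sign $N(D_x)$, so $\sigma^+(x)=+1$ does not imply any open path from $x$ to $\partial\Lambda$ in $\omega$. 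Your ``main obstacle'' paragraph worries only about $x$ near $\Lambda^c$, but the real obstruction is this bulk case, which occurs deep inside $\Lambda$.

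The paper handles this with a three-case analysis of the $\omega$-connectivity of $x$ (to $T$, to $B$, or to nothing), showing in all three cases that the $\omega'$-cluster $C_x$ of $x$ has an edge $f\in\mathcal{I}$ on its outer boundary and $C_x\cap\partial\Lambda=\emptyset$. The decisive step you are missing is then the dichotomy on the \emph{location of $f$}: either some $f\in\mathcal{I}$ is far from $\Lambda^c\cup\mathcal{P}$, which Theorem~\ref{fk.main1} controls directly, or every such $f$ is close to the cut, forcing $C_x$ to have diameter at least $\kappa c^2\ln^2|\Lambda|/2$; the latter event is bounded by the closed-boundary / Peierls-type estimate built around~\eqref{fk.quofclose} in the proof of Theorem~\ref{fk.main2}, not by a direct appeal to the localisation of $\mathcal{I}$. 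Without this large-cluster estimate your proposal cannot close, because the FK-disagreement edge witnessing $\sigma^+(x)\neq\sigma^D(x)$ may sit on $\partial_e C_x$ at a distance comparable to $\mathrm{diam}(C_x)$ from $x$, and hence arbitrarily close to the cut --- exactly the regime where an appeal to $d(x,\mathcal{P}_I)\geqslant R$ combined with the localisation of $\mathcal{I}$ gives nothing.
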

\noindent The key to obtain these two results is to construct a coupling $(X,Y)$, where $X$ is a standard FK-percolation configuration and $Y$ is a configuration where the top side $T$ and the bottom side $B$ of the box $\Lambda$ are disconnected. We denote this event by $\{T\nconnect B\}$. The localisation of the interface in the Ising model is induced by a control of the distance between the interface $\mathcal{I}$ and the set of the pivotal edges $\mathcal{P}$ of  the coupling $(X,Y)$. In this paper, we consider the FK-percolation model with a
parameter $p$ close to 1 and $q$ larger than $1$. Interfaces in a box $\Lambda$ are naturally created when
the configuration is conditioned to stay in the set $\{T\nconnect B\}$. The interface $\mathcal{I}$ is defined as
$$ \mathcal{I}= \big\{\,e\subset\Lambda : X(e) \neq Y(e)\,\big\} 
$$
and we denote by $\mathcal{P}$ the set of the pivotal edges for the event $\{T\nconnect B\}$ in $Y$. Our main result for the FK model is the following.
\begin{thm}\label{fk.main1}
For any $q\geqslant 1$, there exist $\tilde{p}< 1$ and $\kappa>0$, such that, for $p\geqslant \tilde{p}$, any $c\geqslant 1$ and any box $\Lambda$ such that $|\Lambda|\geqslant \max\{3^{6d},(cd)^{cd^2}\}$,
$$\mu_{\Lambda,p,q} \Big(\exists e\in \mathcal{P}\cup\mathcal{I},d\left(e,\Lambda^c\cup\mathcal{P}\setminus\{e\}\right)
	\geqslant \kappa c^2\ln^2 |\Lambda| \Big)\leqslant \frac{1}{|\Lambda|^c}.
$$
\end{thm}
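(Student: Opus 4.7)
The strategy is to build a monotone coupling $(X,Y)$ with $X \sim \mu_{\Lambda,p,q}$, $Y \sim \mu_{\Lambda,p,q}(\,\cdot\mid T\nconnect B)$ and $Y \leq X$ pointwise. This is possible because $\{T\nconnect B\}$ is a decreasing event and $q \geq 1$, so the FKG inequality applies. Under this coupling, $\mathcal{I} = \{e : X(e)=1,\ Y(e)=0\}$ and every edge $e\in\mathcal{P}$ automatically satisfies $Y(e)=0$. Thus $\mathcal{P}\cup\mathcal{I}$ is a subset of $\{e : Y(e)=0\}\cup\{e : X(e)=1,\,Y(e)=0\}$, and the theorem reduces to a geometric statement about $Y$ and the difference $\{e:X(e)\neq Y(e)\}$.

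The core observation I would establish is that, if $e\in \mathcal{P}\cup\mathcal{I}$ lies at distance at least $r$ from both $\Lambda^c$ and the other pivotals $\mathcal{P}\setminus\{e\}$, then a large, ``locally unique'' pattern of $Y$-closed edges must appear in the ball $B(e,r)$. For $e \in \mathcal{P}$, the uniqueness of $e$ as a pivotal in $B(e,r)$ forces the two endpoints of $e$ to sit in clusters touching $T$ and $B$ respectively, separated elsewhere inside $B(e,r)$ by a dual surface of $Y$-closed edges that cannot be shortened without breaking the disconnection. For $e\in\mathcal{I}\setminus\mathcal{P}$ the divergence $X(e)=1,\ Y(e)=0$ cannot be ascribed to a nearby pivotal; a resampling/domain-Markov argument inside $B(e,r)$ should then show that, absent such an ambient dual surface, one could locally re-couple $X$ and $Y$ to agree at $e$, contradicting $e\in\mathcal{I}$.

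I would then estimate these rare patterns via exponential decay of dual connectivity. For $p$ close enough to $1$ (and any $q\geq 1$), the probability that a prescribed annulus of width $L$ around $e$ is crossed by a dual cluster is at most $\exp(-\alpha(p)L)$ with $\alpha(p)\to\infty$ as $p\to 1$. Running this through a multiscale, nested-box cascade around $e$ (so that the small probability of $\{T\nconnect B\}$, which inflates conditional probabilities, is absorbed on a coarser scale), one obtains a bound of the form $\exp(-\alpha'(p)\sqrt{r})$ for each bad event. Taking $r=\kappa c^2\ln^2|\Lambda|$ with $\kappa=\kappa(p)$ large enough yields per-edge probability at most $|\Lambda|^{-c-1}$, and a union bound over the at most $|\Lambda|$ edges closes the estimate. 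The exponent $\sqrt{r}$ coming from the multi-scale cascade is precisely what forces the target to be $\ln^2|\Lambda|$ rather than $\ln|\Lambda|$.

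The main obstacle, I expect, is Step~2 for interface edges: turning ``$e\in\mathcal{I}$ with no nearby pivotal'' into a clean geometric event one can actually estimate. The monotone coupling is global, so $e\in\mathcal{I}$ is \emph{a priori} non-local; localising it requires a careful resampling of $Y$ inside $B(e,r)$ through the domain Markov property, while controlling the overhead introduced by conditioning on the rare event $\{T\nconnect B\}$. Carrying this out uniformly in the (possibly tilted) geometry of $\Lambda$, and reconciling the polynomial tolerance $|\Lambda|^{-c}$ with the surface-tension cost $\exp(-\tau(p,q)\cdot\text{area})$ of $\{T\nconnect B\}$, is where I would expect the bulk of the technical work to lie.
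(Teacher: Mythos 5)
Your plan diverges from the paper's argument at its very foundation, and in a way that creates a genuine gap rather than an alternative route.

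First, the measure $\mu_{\Lambda,p,q}$ appearing in the theorem is not the FK measure: it is the unique invariant law of the \emph{coupled} Gibbs sampler $(X_t,Y_t)_{t\in\mathbb{N}}$ constructed in Section 2.4, and the interface $\mathcal{I}=\{e : X(e)\neq Y(e)\}$ is defined through that specific coupling. Replacing this with a monotone FKG/Strassen coupling having the same marginals changes the joint law and hence changes the distribution of $\mathcal{I}$; you would be proving a statement about a different random set. More importantly, a static monotone coupling carries no information beyond the marginals, so there is no principled way to localise the event $\{e\in\mathcal{I}\}$. Your ``resampling/domain-Markov'' sketch for $e\in\mathcal{I}\setminus\mathcal{P}$ is precisely the point where the paper replaces a static argument with a dynamical one: in the coupled chain, an interface edge must have been created at some finite past time $t_0$, and \emph{at the moment of creation} it must have been a pivotal edge for $\{T\nconnect B\}$ in $Y_{t_0}$. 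This converts ``$e$ is far from $\mathcal{P}$'' into ``the pivotal set has moved a long distance since $t_0$'', which is then excluded by the speed estimate (Proposition \ref{fk.vconst} and Lemma \ref{fk.dH}). No static analog of this step is supplied in your plan, and I do not see how to produce one; this is the central missing idea.

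Second, the origin of the $\ln^2|\Lambda|$ scale in the paper is not a multiscale/cascade exponent of the form $\exp(-\alpha\sqrt{r})$. It arises as a product of two logarithms: one from the spatial localisation of cuts around pivotals (Proposition \ref{fk.distpivot} gives $\kappa c\ln|\Lambda|$), and one from the time needed to decorrelate the dynamics, during which the pivotal set can drift by at most another factor $\ln|\Lambda|$ (Lemma \ref{fk.dH}). Your proposed $\sqrt{r}$ exponent would in fact be \emph{weaker} and does not match the mechanism; accepting it at face value would still leave the interface case unproved. For $e\in\mathcal{P}$ alone, your outline is essentially Proposition \ref{fk.distpivot} of the paper and is fine, but the theorem's content is precisely the extension to $\mathcal{I}$, where the dynamical argument is indispensable.
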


\noindent Let us explain briefly how we build the measures $\mu_{\Lambda,p,q}$ and $\pi_{\Lambda,\beta}$ as well as the strategy for proving theorem \ref{fk.main2}. With the help of a Gibbs sampler algorithm (see section 8.4 of~\cite{FKgrimmett}), we construct a coupling between two Markov chains $(X_t,Y_t)_{t\in \mathbb{N}}$ on the space of the percolation configurations in a box $\Lambda$. The measure $\mu_{\Lambda,p,q}$ is the unique invariant measure of the process $(X_t,Y_t)_{t\in\mathbb{N}}$. Starting from a coupled configuration $(\omega,\omega')$ under the measure $\mu_{\Lambda,p,q}$, we put spins on the vertices in the box $\Lambda$ using an adaptation of the Edwards-Sokal coupling. By construction, the configuration $\omega$ dominates $\omega'$. We put spins at first on the vertices according to the configuration $\omega'$, under the Dobrushin boundary condition, to obtain a spin configuration $\sigma^D$. Then, we put spins according to $\omega$, under the plus (respectively minus) boundary condition to obtain the configuration $\sigma^+$ (respectively $\sigma^-$) with the restriction that an open cluster in $\omega$ also appearing in $\omega'$ has the same spin as in $\sigma^D$.  The measure $\pi_{\Lambda,\beta}$ is the probability distribution of $(\sigma^+,\sigma^-,\sigma^D)$ obtained from $\mu_{\Lambda,p,q}$ and the colouring. Each of its marginals is an Ising measure in the box with the corresponding boundary conditions.

As for the proof of theorem \ref{fk.main1}, we follow the ideas presented in \cite{new}. We control the distance between two pivotal edges by identifying a cut and a closed path disjoint from the cut. However, due to the correlations between all the edges in the FK-percolation model, we cannot use the BK inequality which holds for a product space and which is a key ingredient in $\cite{new}$. To solve this difficulty, we explore adequately the open clusters and we identify a sub-graph in $\Lambda$ containing a long closed path and outside of which we can find a cut. The configurations in this sub-graph can be compared to a Bernoulli configuration. To study the case where the distance between an edge of the interface and the pivotal edges is big, we show that the interface edge cannot have been created a long time ago. Moreover, at the time when it is created, it must be a pivotal edge. Therefore, the set of the pivotal edges must move rather fast. We obtain a control over the speed of the pivotal edges. This estimate relies on the study of specific space-time paths, which describe how the cut sets move. 

These results answer the question 4 raised in \cite{new}, and also give some information to the subsequent question 5. However, we would like to obtain more information about the structure of the set $\mathcal{P}_I$. 

This paper is organised as follows. In section 2, we give the definitions of the objects and the notations which we will use in this article. In section 3, we show the estimate on the distance between two pivotal edges in the FK-percolation model. In section 4, we control the speed of the pivotal edges. In section 5, we show theorem \ref{fk.main1} and we prove theorem \ref{fk.main2} (resp. \ref{fk.main3}) in section 6 (resp. 7).
\paragraph{Acknowledgments.} I warmly thank Jean-Baptiste Gouéré for his attentive reading
and for numerous constructive comments which were essential to improve the presentation of the results and the clarity of the proofs. 

\section{The notations}
In this section, we present the FK-percolation model which we study and we recall some fundamental tools which we will use in the rest of this paper. 
\subsection{Geometric definitions}
We start with some geometric definitions.
\paragraph{The lattice $\mathbb{L}^d$.} For an integer $d\geqslant 2$, the lattice $\mathbb{L}^d$ is the graph $(\mathbb{Z}^d,\mathbb{E}^d)$, where the set $\mathbb{E}^d$ is the set of pairs $\langle x, y\rangle$ of points in $\mathbb{Z}^d$ which are at Euclidean distance $1$.
\paragraph{The usual paths.} We say that two edges $e$ and $f$ are neighbours if they have one endpoint in common. A usual path is a sequence of edges $(e_i)_{1\leqslant i \leqslant n}$ such that for $1\leqslant i< n$, $e_i$ and $e_{i+1}$ are neighbours.
\paragraph{The $*$-paths.} In order to study the cuts in any dimension $d \geqslant 2$, we use $*$-connectedness on the edges as in~\cite{Pisztora1996Surface}. We consider the supremum norm on $\mathbb{R}^d$:
$$\forall x = (x_1,\dots,x_d)\in \mathbb{R}^d\qquad\parallel x\parallel_\infty = \max_{i = 1,\dots,d}|x_i|.
$$
For $e$ an edge in $\mathbb{E}^d$, we denote by $m_e$ the center of the unit segment associated to $e$. We say that two edges $e$ and $f$ of $\mathbb{E}^d$ are $*$-neighbours if $\parallel m_e-m_f\parallel_\infty\leqslant 1$. A $*$-path is a sequence of edges $(e_1,\dots,e_n)$ such that, for $1\leqslant i <n$, the edge $e_i$ and $e_{i+1}$ are $*$-neighbours. For a path $\gamma$, we denote by $\mathrm{support}(\gamma)$ the set of the edges of $\gamma$. We say that a path is simple if the cardinal of its support is equal to its length. 

\paragraph{The box $\Lambda$.} We will mostly work in a box $\Lambda$ centred at the origin (not necessarily straight) as illustrated in the figure \ref{fk.fig:box}. More precisely, we will consider a $d$-cube $\Lambda$ centred at origin. We can also consider $\Lambda$ as the graph $\Lambda = (V,E)$ is the sub-graph of $\mathbb{L}^d$ whose vertices are included in the cube. The boundary of $\Lambda$, denoted by $\partial\Lambda$, is defined as,
$$\partial \Lambda = \oset x\in V\, :\, \exists y \notin K, \langle x, y \rangle \in \mathbb{E}^d\cset.
$$
\begin{figure}[ht]
\centering{
\resizebox{100mm}{!}{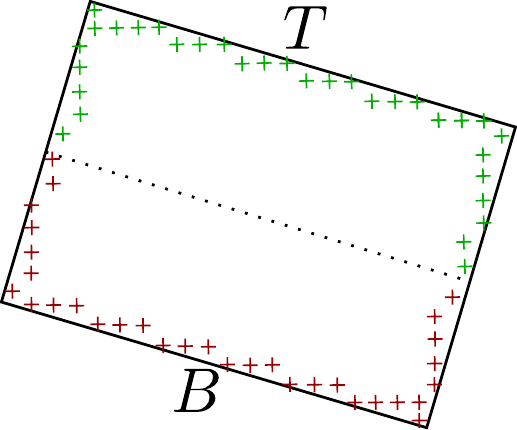}
\caption{The box $\Lambda$ and its boundary (the crosses). The green crosses form the side $T$ and the red ones form the side $B$.}
\label{fk.fig:box}
}
\end{figure}
We will distinguish two disjoint non-empty subsets of $\partial\Lambda$, denoted by $T$ and $B$. We consider a $(d-1)$ dimensional plane containing the origin and parallel to a side of $K$. This plane separates $\Lambda$ into two parts $\Lambda^+$ and $\Lambda^-$. The set $T$ is the subset of $\partial\Lambda$ included in $\Lambda^+$ and $B$ the one included in $\Lambda^-$.
\paragraph{The separating sets.} Let $A,B$ be two subsets of $\Lambda$. We say that a set of edges $S\subset \Lambda$ separates $A$ and $B$ if no connected subset of $\Lambda\cap\mathbb{E}^d\setminus S$ intersects both $A$ and $B$. Such a set $S$ is called a separating set for $A$ and $B$. We say that a separating set is minimal if there does not exist a strict subset of $S$ which separates $A$ and $B$.

\paragraph{The cuts.} We say that $S$ is a cut if $S$ separates $T$ and $B$, and $S$ is minimal for the inclusion.

\subsection{The Ising model}
Let $\Lambda= (V,E)$ be the finite box. We associate to each vertex $x\in V$ a random spin $\sigma(x)$ which can either be $+1$ or $-1$. The spin values are chosen according to a certain probability measure $\lambda_\beta$, known as a Gibbs state, which depends on a parameter $\beta\in [0,+\infty[$, and is given by
$$\lambda_\beta(\sigma)= \frac{e^{-\beta H(\sigma)}}{Z_I},\quad \sigma\in\{+1,-1\}^V,
$$
where $$H(\sigma) = -\sum_{\langle x,y\rangle \in E}\sigma(x)\sigma(y)$$ is the Hamiltonian and $Z_I$ is the normalisation constant called the partition function.
\subsection{The FK-percolation model}
Also known as the random-cluster model, the FK-percolation model is a generalisation of the Bernoulli percolation model, in which we introduce correlations between edges by taking into account the number of open clusters in a configuration. On a finite graph $(V,E)$, a random cluster-measure is a member of a certain class of probability measures on the space set $\{0,1\}^E$. Let $\omega$ belongs to $\{0,1\}^E$, we say that an edge $e$ is open if $\omega(e) = 1$ and closed if $\omega(e) = 0$, and we set $$\eta(\omega) = \oset e\in E\,:\, \omega(e) =1\cset.$$
Let $k(\omega)$ be the number of connected components (or the open clusters) of the graph $(V,\eta(\omega))$, and note that $k(\omega)$ includes the count of the isolated vertices, that is, of vertices incident to no open edge. For two parameters $p\in[0,1]$ and $q>0$, the \textit{random-cluster measure} $\Phi_{\Lambda,p,q}$ is defined as 
$$\fk{}{\Lambda,p,q}{\omega} =  \frac{1}{Z_{RC}}\left\lbrace \prod_{e\in E}p^{\omega(e)}(1-p)^{1-\omega(e)}\right\rbrace q^{k(\omega)}, \quad \omega \in \{0,1\}^E,
$$
where the partition function $Z_{RC}$ is given by 
$$Z_{RC} = \sum_{\omega\in \{0,1\}^E}\left\lbrace\prod_{e\in E}p^{\omega(e)}(1-p)^{1-\omega(e)}\right\rbrace q^{k(\omega)}.
$$
In our study, we will consider only the case where $q\geqslant
1$.
\paragraph{Boundary conditions.}
We will consider different boundary conditions. Let $\xi\in \{0,1\}^{\mathbb{E}^d}$ and $\Lambda = (V,E)$ be the box. Let $\Omega^\xi$ denote the subset of $\{0,1\}^{\mathbb{E}^d}$ consisting of all the configurations $\omega$ satisfying $\omega(e) = \xi(e)$ for $e\in\mathbb{E}^d\setminus E$. We shall write $\Phi_{\Lambda,p,q}^\xi$ for the random-cluster measure on $\Lambda$ with boundary condition $\xi$, given by
$$\fk{\xi}{\Lambda,p,q}{\omega} = \left\lbrace\begin{array}{ll}
\frac{1}{Z^\xi}\left\lbrace\prod_{e\in E}p^{\omega(e)}(1-p)^{1-\omega(e)}\right\rbrace q^{k(\omega,\Lambda)} & \text{if }\omega\in \Omega^\xi,\\
0& \text{otherwise},
\end{array} \right.
$$
where $k(\omega,\Lambda)$ is the number of components of the graph $(\mathbb{Z}^d,\eta(\omega))$ that intersect $\Lambda$, and $Z^\xi$ is the appropriate normalizing constant.
In particular, we will consider three special boundary conditions in this paper:
\begin{itemize}[leftmargin = 0.4cm]
\item The $0$-boundary condition corresponds to the case where all the edges of $\xi$ are closed. This condition is also called the \textit{free} boundary condition.
\item The $1$-boundary condition corresponds to the case where all the edges of $\xi$ are open. We can also see this condition as adding one vertex which is connected to all the vertices of $\partial\Lambda$, therefore, this boundary condition is called the \textit{wired} boundary condition.
\item The $TB$-boundary condition corresponds to the Dobrushin boundary con\-dition for the Ising model introduced in \cite{D72Gibbs}. This boundary condition corresponds to a configuration $\xi$ where all the vertices of $T$ are connected and all the vertices of $B$ are connected by open paths of edges outside of $\Lambda$, but there is no open path which connects a vertex of $T$ to a vertex of $B$. We can also see this condition as adding two vertices to the graph $\Lambda$, one of which is connected to all the vertices of $T$ and the other one connected to all the vertices of $B$. 
\end{itemize}
In order to simplify the notations, we omit $\Lambda,p,q,\xi$ in $\Phi_{\Lambda,p,q}^\xi$ if it doesn't create confusions. We will use two fundamental properties of the FK-perco\-lation model called the spatial Markov property (see chapter 4.2 of \cite{FKgrimmett}) and the comparison between different values of $p$ and $q$ stated in chapter 3.4 of~\cite{FKgrimmett}.

\subsection{Coupled dynamics of FK-percolation}
We will use a special Glauber process called the Gibbs sampler to study the conditioned FK-measure. Consider the finite graph $\Lambda = (V,E)$. To simplify the notations, we omit the $\Lambda,p,q$ in $\Phi_{\Lambda,p,q}$ in this section. The Gibbs sampler is a Markov chain $(X_t)_{t\in\mathbb{N}}$ on the state space $\Omega = \{0,1\}^E$. At a time $t$, we choose an edge $e$ uniformly in $E$, and we set the status of $e$ according to the current states of the other edges. More precisely, let $(U_t)_{t\in \mathbb{N}}$ be a sequence of uniform variables on $[0,1]$ and $(E_t)_{t\in N}$ be a sequence of uniform variables in $E$. For $\omega\in\Omega$ and $e\in E$, we denote by $\omega^e$ the configuration obtained by opening the edge $e$ and by $\omega_e$ the one where $e$ is closed. At time $t$, we suppose $X_{t-1} = \omega$ and we set 
$$X_{t}(e) = \left\lbrace\begin{array}{cl}
\omega(e) & \text{if } E_t \neq e\\
1 & \text{if } E_t = e \text{ and }U_t\geqslant \frac{\Phi({\omega}_e)}{\Phi({\omega}^e)+\Phi({\omega}_e)}\\
0 & \text{if } E_t = e \text{ and }U_t< \frac{\Phi({\omega}_e)}{\Phi({\omega}^e)+\Phi({\omega}_e)}
\end{array}\right..
$$
We also define a coupled process $(Y_t)_{t\in \mathbb{N}}$ which stays in $\oset T\nconnect B\cset$. We use the same sequences $(E_t)_{t\in \mathbb{N}}$ and $(U_t)_{t\in\mathbb{N}}$. At time $t$, we suppose $Y_{t-1}(e) = \omega$ and we change the status of the edge $e$ in $Y_t$ as follows:
$$Y_{t}(e) = \left\lbrace\begin{array}{cl}
\omega(e) & \text{if } E_t \neq e\\
1 & \text{if } E_t = e, U_t\geqslant \frac{\Phi({\omega}_e)}{\Phi({\omega}^e)+\Phi(\omega_e)} \text{ and }T\overset{\omega^e}{\nconnect} B\\
0 & \text{if } E_t = e, U_t\geqslant \frac{\Phi(\omega_e)}{\Phi(\omega^e)+\Phi(\omega_e)} \text{ and }T\overset{\omega^e}{\longleftrightarrow} B\\
0 & \text{if } E_t = e,U_t< \frac{\Phi(\omega_e)}{\Phi(\omega^e)+\Phi(\omega_e)}
\end{array}\right..
$$
Before opening a closed edge $e$ at time $t$, we verify whether this will create a connexion between $T$ and $B$ in $Y_t$. If it is the case, the edge $e$ stays closed in $Y_t$ but can be opened in $X_t$, otherwise the edge $e$ is opened in both $X_t$ and $Y_t$. On the contrary, the two processes behave similarly for the edge closing events since we cannot create a new connexion by closing an edge. The set of the configurations satisfying $\oset T\nconnect B \cset$ is irreducible and the process $(X_t)_{t\in\mathbb{N}}$ is reversible. By the lemma 1.9 of~\cite{Kelly:2011:RSN:2025239}, there exists a unique stationary distribution $\Phi^\mathcal{D}$ for the process $(Y_t)_{t\in\mathbb{N}}$ and $\Phi^\mathcal{D}$ is equal to the probability $\Phi^{TB}$ conditioned by the event $\oset T\nconnect B\cset$, i.e., 
$$\Phi^\mathcal{D} (\cdot)= \Phi^{TB}(\cdot \,|\, T\nconnect B) .
$$
Suppose that we start from a configuration $(X_0,Y_0)$ belonging to the set 
$$\mathcal{E} = \oset (\omega_1,\omega_2)\in \{0,1\}^{\mathbb{E}^d\cap \Lambda}\times \{T\nconnect B\}\,:\, \forall e\subset \Lambda\quad \omega_1(e)\geqslant \omega_2(e)\cset.
$$
The set $\mathcal{E}$ is irreducible and aperiodic. In fact, each configuration of $\mathcal{E}$ communicates with the configuration where all edges are closed. The state space $\mathcal{E}$ is finite, therefore the Markov chain $(X_t,Y_t)_{t\in\mathbb{N}}$ admits a unique equilibrium distribution $\mu_p$. We denote by $P_\mu$ the law of the process $(X_t,Y_t)_{t\in\mathbb{N}}$ starting from a random initial configuration $(X_0,Y_0)$ with distribution $\mu_{\Lambda,p,q}$.
We define the following objects using the previous coupling.
\begin{defi}The interface at time $t$ between $T$ and $B$, denoted by $\mathcal{I}_t$, is the set of the edges in $\Lambda$ that differ in the configurations $X_t$ and $Y_t$, i.e.,
$$ \mathcal{I}_t = \big\{\,e\subset\Lambda : X_t(e) \neq Y_t(e)\,\big\}.
$$
\end{defi}
\noindent The edges of $\mathcal{I}_t$ are open in $X_t$ but closed in $Y_t$ and the configuration $X_t$ is above the configuration  $Y_t$. We define next the set $\mathcal{P}_t$ of the pivotal edges for the event $\{T\nconnect B\}$ in the configuration $Y_t$.
\begin{defi}
The set $\mathcal{P}_t$ of the pivotal edges in $Y_t$ is the collection of the edges in $\Lambda$ whose opening would create a connection between $T$ and $B$, i.e.,
$$\mathcal{P}_t = \big\{ \,e\subset\Lambda : T\overset{Y_t^e}{\longleftrightarrow} B\,\big\}.
$$
\end{defi}
\noindent We define finally the set $\mathcal{C}_t$ of the cuts in $Y_t$.
\begin{defi}
The set $\mathcal{C}_t$ of the cuts in $Y_t$ is the collection of the cuts in $ \Lambda$ at time $t$.
\end{defi}
\subsection{The classical Edwards-Sokal coupling}
We wish to gain insight into the interface in the Ising model with the help of our previous results and the classical coupling of Edwards and Sokal (see chapter 1 of \cite{FKgrimmett} for more details on this coupling). Let $\Lambda = (V,E)$ be the box. We consider the product space $\Sigma\times \Omega$ where $\Sigma = \{-1,1\}^V$ and $\Omega = \{0,1\}^E$. We define a probability $\nu$ on $\Sigma\times \Omega$ by 
$$\nu(\sigma,\omega) \propto \prod_e\Big\{ (1-p)\delta_{\omega(e),0}+ p\delta_{\omega(e),1}\delta_e(\sigma) \Big\}, \quad (\sigma,\omega)\in \Sigma\times \Omega,
$$
where $\delta_e(\sigma) = \delta_{\sigma(x),\sigma(y)}$ for $e =\langle x,y\rangle\in E$. The constant of proportionality is the one which ensures the normalization 
$$\sum_{(\sigma,\omega)\in \Sigma\times \Omega} \nu(\sigma,\omega) = 1.
$$
For $q = 2$, $p = 1-e^{-\beta}$ and $\omega\in\Omega$, the conditional measure $\nu(\cdot|\omega)$ on $\Sigma$ is obtained by colouring randomly the clusters of $\omega$. More precisely, conditionally on a percolation configuration $\omega$, the spins are constant on the clusters of $\omega$ and they are independent between the clusters. With the help of this coupling, we can transport results in FK-percolation to the Ising model.
\subsection{The coupling of spin configurations}
We construct a coupling of the Ising configurations $(\sigma^+,\sigma^-,\sigma^D)$ with different boundary conditions from a pair of percolation configurations $$(\omega,\omega')\in \Omega\times\{T\nconnect B\}$$ satisfying $\omega\geqslant\omega'$. The configuration $\sigma^+$ (resp. $\sigma^-$) will correspond to the spin configuration with the $+$ boundary condition (resp. $-$ boundary condition) and the configuration $\sigma^D$ will correspond to the Dobrushin boundary condition. We will put spins on the vertices in $\Lambda$ as follows. We start by putting spins on the vertices of the clusters of $\omega'$ using the Edwards-Sokal coupling with the Dobrushin boundary conditions. This way we obtain a spin configuration, which we denote by $\sigma^D$. Notice that the open clusters of $\omega$ are unions of the open clusters of $\omega'$. For an open cluster in $\omega$ which touches the boundary of $\Lambda$, we color its vertices with $+1$ in the configuration $\sigma^+$ and with $-1$ in $\sigma^-$. For an open cluster $C$ which does not touch the boundary of $\Lambda$, if $C$ is also an open cluster in $\omega'$, we set
$$\forall x\in C\quad \sigma^+(x) = \sigma^-(x) = \sigma^D(x).$$
For an open cluster $C$ of $\omega$ which does not touch the boundary of $\Lambda$ and which is the union of several open clusters of $\omega'$, we set 
$$\forall x\in C\quad \sigma^+(x) = \sigma^-(x) = N(C) ,
$$
where $N(C)$ is equal to $1$ with probability $1/2$ and $-1$ with probability $1/2$. Of course, the random variables $N(C)$ associated to the clusters of $\omega$ are independent and also independent from the configuration outside $C$. For $q = 2, p= 1-e^{-\beta}$ and $(\omega,\omega')$ distributed under $\mu_{\Lambda,p,2}$, the configurations $\sigma^+,\sigma^-$ and $\sigma^D$ obtained are distributed according to the Gibbs state at inverse temperature $\beta$ with boundary conditions $+,-$ and Dobrushin. We denote by $\pi_{\Lambda,\beta}$ the distribution of the triple $(\sigma^+,\sigma^-,\sigma^D)$.
\section{Localising a cut around the pivotal edges}
The following proposition controls the distance between an edge belonging to a cut and the set of the pivotal edges. 
\begin{prop} \label{fk.distpivot}
For any $q\geqslant 1$, there exist $\tilde{p}<1$ and $\kappa>1$ such that, for $p\geqslant \tilde{p}$, and for any $c \geqslant 1$ and any box $\Lambda$ satisfying $|\Lambda|> 3^{6d}$, we have
$$ \cfk{TB}{\Lambda,p,q}{\exists C\in\mathcal{C},\exists e\in C, d(e,\mathcal{P}\cup\Lambda^c\setminus \{e\})\geqslant \kappa c\ln|\Lambda|}{T\nconnect B} \leqslant \frac{1}{|\Lambda|^c}.
$$
\end{prop}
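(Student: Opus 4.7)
The plan is to bound the conditional probability by exhibiting a structural witness at each potential edge $e$, and then to estimate its probability via an exploration of the two main open clusters $\mathcal{C}_T$ and $\mathcal{C}_B$ of $Y$, followed by a Bernoulli comparison in the unexplored region.

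Fix $e \in \Lambda$ and set $R := \kappa c \ln|\Lambda|$. I consider the single-edge event
\[ \mathcal{A}_e := \bigl\{\, \exists C \in \mathcal{C},\; e \in C,\; d(e, \mathcal{P} \cup \Lambda^c \setminus \{e\}) \geq R\, \bigr\}, \]
and the proposition will follow by showing $\Phi^{TB}(\mathcal{A}_e \mid T \nconnect B) \leq |\Lambda|^{-c-1}$ and summing over $e \in \Lambda$. On $\mathcal{A}_e$, the ball $B(e, R)$ is contained in $\Lambda$, and the cut $C$, as a minimal geometric separating set between $T$ and $B$, is $*$-connected and has the structure of a $(d-1)$-dimensional dual hypersurface through $e$. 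One can therefore extract from $C \cap B(e, R)$ a $*$-path $\gamma = (f_1, \ldots, f_L)$ with $f_1 = e$ and $L \geq R$, whose edges are all closed in $Y$; by the distance assumption, $f_2, \ldots, f_L$ are all non-pivotal.

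To estimate the probability of such a witness, I explore from $T \cup B$ the clusters $\mathcal{C}_T$ and $\mathcal{C}_B$ of $Y$, together with the closed edges at their boundaries. Setting $V_G := V \setminus (\mathcal{C}_T \cup \mathcal{C}_B)$ and letting $E_G$ be the edges with both endpoints in $V_G$, the event $\{T \nconnect B\}$ is measurable with respect to this exploration, and by the spatial Markov property of the FK-measure, the conditional law of $Y$ on $E_G$ is the FK-measure on $(V_G, E_G)$ with free boundary condition, namely $\Phi^{0}_{(V_G, E_G), p, q}$. For $q \geq 1$, this measure stochastically dominates the Bernoulli product measure of parameter $\tilde p := p/(p + q(1-p))$, which tends to $1$ as $p \to 1$; hence the conditional probability that a given set of $\ell$ edges of $E_G$ is entirely closed is at most $(1-\tilde p)^\ell$.

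The key deterministic step is that $\gamma$ supplies many ``free'' closed edges in $E_G$. A case analysis using the minimality of $C$ (which forces the endpoints of each $f \in C$ to lie in different components of $\Lambda \setminus C$), together with the non-pivotality of $f_2, \ldots, f_L$ (which excludes having one endpoint in $\mathcal{C}_T$ and the other in $\mathcal{C}_B$), shows that each $f_i$ has at least one endpoint in $V_G$. A combinatorial/topological argument based on $*$-connectivity in the dual and the geometry of the cut then extracts from $\gamma$ a sub-structure requiring at least $R/k_d$ edges in $E_G$, for some dimension-dependent constant $k_d$. Summing the Bernoulli bound over all $*$-paths of length $R$ emanating from $e$ (at most $(3^d)^R$ of them) gives
\[ \Phi^{TB}(\mathcal{A}_e \mid T \nconnect B) \leq \bigl(3^d (1-\tilde p)^{1/k_d}\bigr)^R, \]
which for $p$ close enough to $1$ is at most $\alpha^R$ for some $\alpha \in (0,1)$. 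A union bound over $e \in \Lambda$ combined with the choice $R = \kappa c \ln|\Lambda|$ for $\kappa$ large enough then yields $|\Lambda|^{-c}$.

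The main obstacle is the deterministic extraction of a long ``free'' sub-structure in the third paragraph: if $\gamma$ winds along $\partial \mathcal{C}_T$ or $\partial \mathcal{C}_B$, most of its edges are revealed by the exploration and the Bernoulli bound does not apply directly to them. Ruling out such boundary-hugging configurations requires a careful use of the minimality of the cut, of $*$-connectivity in the dual lattice, and of the absence of pivotal edges in $B(e, R)$, which forces the separating structure to be genuinely ``thick'' rather than collapsing onto a single cluster boundary.
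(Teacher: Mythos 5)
Your overall strategy---extract a long closed $*$-path $\gamma$ through $e$ disjoint from a cut, then bound its probability by an exploration followed by a Bernoulli comparison in the unexplored region---is the right one and is essentially the paper's. The decisive difference is in \emph{which} exploration you run, and this is exactly where your argument breaks. You reveal both $\mathcal{C}_T$ and $\mathcal{C}_B$ together with the closed edges on their boundaries, and then apply the Bernoulli domination only to $E_G$, the edges with both endpoints outside $\mathcal{C}_T\cup\mathcal{C}_B$. The obstruction you name at the end is fatal, not a technicality: a closed cut far from pivotal edges is free to run along the edge boundary of a single open cluster for a long stretch, since the hypothesis only keeps $e$ away from edges joining $\mathcal{C}_T$ to $\mathcal{C}_B$, not away from edges joining either cluster to the rest of $\Lambda$. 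In that scenario almost every edge of $\gamma$ has an endpoint in $\mathcal{C}_T$ or $\mathcal{C}_B$, hence lies outside $E_G$ and was already revealed, and your product bound yields nothing. The claimed extraction of $R/k_d$ edges of $E_G$ is therefore not a missing combinatorial lemma but a false statement in general.

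The paper's lemma achieves what you want by exploring from one side only. Since $\gamma$ is disjoint from a cut $C$, it lies entirely on one side of $C$; say $C$ separates $\gamma$ from $T$. One explores only the open cluster $O(T)$ of $T$ and its outer closed edge boundary $C^{+}$, and takes $\mathcal{G}$ to be the subgraph induced on the vertices of $\Lambda\setminus O(T)$ reachable from $B$ without crossing $C^{+}$. Then $\gamma\subset\mathcal{G}$, and---this is the whole point---$\mathcal{G}$ still contains $\mathcal{C}_B$ together with its closed boundary edges, so a $\gamma$ hugging that boundary remains fresh for the comparison $\Phi^{\xi}_{\mathcal{G}}\leqslant\Phi^{0}_{\mathcal{G}}\preceq\mathrm{Bernoulli}(f(p,q))$ on decreasing events, followed by a Peierls count of $*$-paths. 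The symmetric case ($\gamma$ separated from $B$) is handled by exploring from $B$ instead; a union bound combines the two, and the inclusion $\{O(T)\cap B=\emptyset\}\subset\{T\text{ not connected to }B\}$ reinstates the conditioning on disconnection. Replace your two-sided exploration by this one-sided one and the rest of your outline goes through.
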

\noindent The strategy of the proof follows that of proposition 1.4 of~\cite{new}. We can still observe a closed path which is disjoint from a cut. However, one key ingredient of the proof in~\cite{new} is the BK inequality (see \cite{grimmett1999percolation}) which doesn't hold for the FK-percolation model. The following lemma gives us an inequality which plays the role of the BK inequality in the proof.

\begin{lem}\label{fk.fbk}
\noindent Let $n\geqslant 2$, $q\geqslant 1$, $p\in[0,1]$, and let $e$ be an edge in $\Lambda$. We define the event
$$\Gamma(e,n,T) =\left\lbrace
\begin{array}{c}\text{there exists a closed path of length }n\text{ starting from }e \\\text{ and there exists a cut which separates this path from }T\end{array}\right\rbrace.
$$
The following inequality holds:
$$\fk{TB}{\Lambda,p,q}{\Gamma(e,n,T)}\\
\leqslant \alpha^n(d)\left(1-f(p,q)\right)^n\fk{TB}{\Lambda}{T\nconnect B},
$$
where $$
f(p,q)=\frac{p}{p+q-pq}
$$
and $\alpha(d)$ is the number of $*$-neighbours of an edge in the lattice $\mathbb{L}^d$.
\end{lem}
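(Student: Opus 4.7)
The plan is to bound the probability of $\Gamma(e,n,T)$ by a union bound over the $*$-paths of length $n$ emanating from $e$, and then to handle each such path via a spatial Markov estimate that plays the role of the BK inequality. Since every edge has at most $\alpha(d)$ $*$-neighbours, the number of $*$-paths of length $n$ whose first edge is $e$ is at most $\alpha(d)^{n-1}\leqslant\alpha(d)^n$. The union bound reduces the task to proving, for each fixed $*$-path $\gamma=(f_1,\ldots,f_n)$ with $f_1=e$, that
$$\Phi^{TB}_{\Lambda,p,q}\big(\text{all }f_i\text{ are closed and some cut separates }\gamma\text{ from }T\big)\leqslant (1-f(p,q))^n\,\Phi^{TB}_{\Lambda,p,q}(T\nconnect B).$$

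The next step is a geometric observation that decouples the two constraints. If all edges of $\gamma$ are closed and $C$ is a cut of closed edges separating $\gamma$ from $T$, then every vertex of $\gamma$ lies on the $B$-side of $C$, so no edge of $\gamma$ straddles $C$, which forces $C\cap\gamma=\emptyset$. Consequently the relevant event is contained in $\{\gamma\text{ closed}\}\cap B_\gamma$, where
$$B_\gamma=\{\exists\text{ a cut }C\subset E\setminus\gamma\text{ of closed edges with }\gamma\text{ on its }B\text{-side}\}$$
depends only on the restriction of $\omega$ to $E\setminus\gamma$.

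The heart of the argument is then the spatial Markov property. For any edge $f$ and any configuration $\eta$ on $E\setminus\{f\}$, a direct computation with the FK weight gives
$$\Phi^{TB}_{\Lambda,p,q}\big(\omega(f)=0\bigm|\omega|_{E\setminus\{f\}}=\eta\big)=\frac{(1-p)q^{\Delta}}{p+(1-p)q^{\Delta}}\leqslant 1-f(p,q),$$
where $\Delta\in\{0,1\}$ is the increment in the number of clusters produced by closing $f$; the bound uses $q\geqslant 1$. Disintegrating along $\omega|_{E\setminus\gamma}\in B_\gamma$ and revealing the edges $f_1,\ldots,f_n$ one at a time, I iterate this single-edge estimate to obtain $\Phi^{TB}_{\Lambda,p,q}(\gamma\text{ closed},B_\gamma)\leqslant (1-f(p,q))^n\Phi^{TB}_{\Lambda,p,q}(B_\gamma)$. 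Since any realisation of $B_\gamma$ certifies the existence of a closed separating set between $T$ and $B$, one has $B_\gamma\subseteq\{T\nconnect B\}$ and hence $\Phi^{TB}_{\Lambda,p,q}(B_\gamma)\leqslant \Phi^{TB}_{\Lambda,p,q}(T\nconnect B)$. Summing over the at most $\alpha(d)^n$ candidate paths then yields the claim.

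The main obstacle will be the iterative use of the spatial Markov property: since the FK measure is not a product, the single-edge bound does not immediately combine into a joint bound over the $n$ edges of $\gamma$. What rescues the argument is that the upper bound $1-f(p,q)$ is uniform in the surrounding configuration, so it survives when one reveals the edges $f_1,\ldots,f_n$ one by one; this uniformity will have to be invoked carefully, and it is the only place where the hypothesis $q\geqslant 1$ plays an essential role.
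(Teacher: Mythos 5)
Your proof is correct, and it takes a genuinely different route from the paper while landing on the same estimate. The paper first explores the open cluster $O(T)$ of $T$, defines the closed boundary $C^+$ and the unexplored sub-graph $\mathcal{G}$ containing $B$, conditions on the configuration of the explored edges (spatial Markov), compares $\Phi^{0}_{G_1,p,q}$ to Bernoulli-$f(p,q)$ via the comparison inequalities for decreasing events, and only \emph{then} performs the Peierls count over $*$-paths inside $\mathcal{G}$; the final bookkeeping identifies $\sum_{G_1}\Phi(\mathcal{G}=G_1)$ with $\Phi(O(T)\cap B=\emptyset)\leqslant\Phi(T\nconnect B)$. You instead push the union bound over $*$-paths to the very start, and for each fixed $\gamma$ you exploit the (correct) geometric observation that a cut separating $\gamma$ from $T$ must be disjoint from $\gamma$ (every edge of a minimal separating set straddles it), so the cut event $B_\gamma$ lives on $E\setminus\gamma$; conditioning on $\omega|_{E\setminus\gamma}$ and invoking the uniform single-edge bound $\Phi(\omega(f)=0\mid\text{rest})\leqslant 1-f(p,q)$ (valid for $q\geqslant 1$, the only place this hypothesis enters, for both cases $\Delta\in\{0,1\}$) then gives $(1-f(p,q))^n$. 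Your route is shorter and avoids the topological discussion attached to the exploration construction; the paper's exploration/comparison route is more of a reusable workhorse (the same $\mathcal{G}$-decomposition is redeployed later in the proof of Theorem \ref{fk.main2}), which is presumably why the author sets it up once here. One small point worth spelling out when you ``reveal the edges one at a time'': at the $i$-th step you condition only on $\eta$ and on $\omega(f_1)=\cdots=\omega(f_{i-1})=0$, not on the remaining edges $f_{i+1},\ldots,f_n$; the bound $\leqslant 1-f(p,q)$ still holds because this conditional probability is a convex combination (over the unrevealed $\omega(f_{i+1}),\ldots,\omega(f_n)$) of genuine single-edge conditionals each $\leqslant 1-f(p,q)$. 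Equivalently, one can just note that $\{\gamma\text{ closed}\}$ is decreasing and that the conditional law on $\{0,1\}^\gamma$ given $\eta$ is an FK measure with some boundary condition, hence stochastically dominates Bernoulli-$f(p,q)$; this is the same comparison inequality the paper uses.
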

\noindent Notice that this inequality also holds if we consider the symmetric event $\Gamma(e,n,B)$.
\begin{proof}
Let us start with the construction of a random graph in $\Lambda$ in which we can find a path starting from $e$ and the complementary of which contains a cut. The construction is inspired by the proof of theorem 5.3 in \cite{MR1379156}. For the topological complications in the construction, we refer to the related passage of \cite[Sect. 2]{MR876084}. We define the open cluster of $T$ as
$$O(T) = \left\lbrace x\in \Lambda\,:\,x\longleftrightarrow T \right\rbrace.
$$
We consider a configuration satisfying $\{T\nconnect B\}$.
Then we define the set $C^+$ as the set of the edges $f$ satisfying
\begin{itemize}[leftmargin = 0.6cm]
\item[1.] $f$ has exactly one endpoint in $O(T)$;
\item[2.] there exists a geometric path in $\mathbb{L}^d$ which connects $B$ to $f$ and which does not use vertices of $O(T)$.
\end{itemize}
Notice that the path in point 2 is only geometric, there is no requirement on the status of the edges of this path.
We note three facts about the set $C^+$:
\begin{itemize}[leftmargin = 0.6cm]
\item[a.] the edges of $C^+$ are closed,
\item[b.] the set $C^+$ contains a cut,
\item[c.] $C^+$ is measurable with respect to the configuration of the edges which have at least one endpoint in $O(T)$.
\end{itemize}
Let us now consider a configuration in the event $\Gamma(e,n,T)$.
The cut in $C^+$ separates $e$ from $T$. We now construct a sub-graph $\mathcal{G}$ of $\Lambda$.  We define $V'$ as the set of the vertices included in $V\setminus O(T)$ which are connected to $B$ without using an edge of $C^+$. We define  the sub-graph $\mathcal{G}$ as 
$$\mathcal{G} = (V',\{\langle x,y\rangle\,:\, x,y\in V'\}).
$$The figure~\ref{fk.fig:sponge} illustrates the construction of the graph $\mathcal{G}$.
\begin{figure}[ht]
\centering{
\resizebox{100mm}{!}{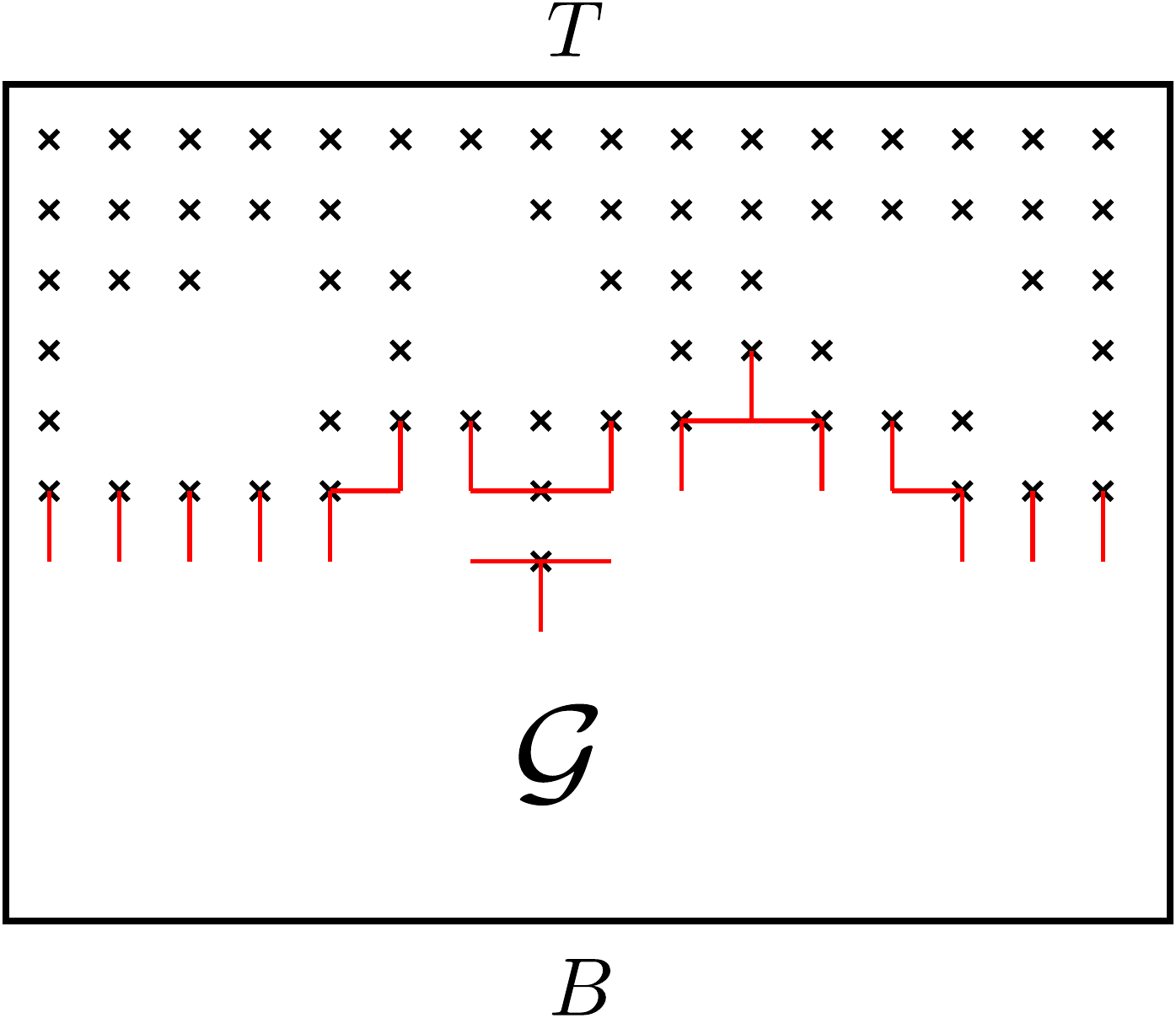}
\caption{The crosses are the vertices of $O(T)$ and the red edges form the set $C^+$, the graph $\mathcal{G}$ remains unexplored.}
\label{fk.fig:sponge}
}
\end{figure} 
From the construction, it follows that the cut in $C^+$ does not contain an edge of the graph $\mathcal{G}$. For a fixed sub-graph $G_1 = (V_1,E_1)$ of $(V,E)$, we denote by $G'$ the graph $(V',E')$, where $E' = E\setminus E_1$ and $V'$ is the set of the endpoints of the edges in $E'$. We decompose the set of the edge configurations $\Omega$ in $\Lambda$ as $$\Omega = \Omega_{G_1}\times \Omega_{G'},$$ where $\Omega_G$ (respectively $\Omega_{G'}$) is the set of the configurations of the edges in $G_1$ (respectively $G'$). We notice that the event $\mathcal{G} = G_1$ is entirely determined by the configurations $ \omega_{G'}\in \Omega_{G'}$. We obtain that 
\begin{multline}\fk{TB}{\Lambda,p,q}{\Gamma(e,n,T)} =\sum_{G_1}\fk{TB}{\Lambda,p,q}{\Gamma(e,n,T),\mathcal{G} = G_1} \\= \sum_{G_1}\sum_{\eta\in \Omega_{G'}: \mathcal{G} = G_1}\fk{TB}{\Lambda,p,q}{\Gamma(e,n,T),\omega_{G'} = \eta}\\
= \sum_{G_1}\sum_{\eta\in \Omega_{G'}: \mathcal{G} = G_1}\cfk{TB}{\Lambda,p,q}{\Gamma(e,n,T)}{\omega_{G'} = \eta}\fk{TB}{\Lambda,p,q}{\omega_{G'}=\eta}.\label{fk.sum_mkv}
\end{multline}
We define the event $\Gamma(e,n)$ as
$$\Gamma(e,n) =\oset \text{there exists a closed $*$-path of length }n\text{ starting from }e\cset.
$$
By the spatial Markov property, for a fixed graph $G_1$ containing all the edges intersecting $B$ and $\eta$ such that $\mathcal{G} = G_1$, we have
$$\cfk{TB}{\Lambda,p,q}{\Gamma(e,n,T)}{\omega_{G'} = \eta} = \fk{\xi(\eta)}{G_1,p,q}{\Gamma(e,n)}\leqslant\fk{0}{G_1,p,q}{\Gamma(e,n)},
$$
where $\xi(\eta)$ is the boundary condition on $G_1$ induced by $\eta$. The last inequality holds because the event $\Gamma(e,n)$ is decreasing.
As for the configurations in the graph $G_1$, we can compare the measure $\Phi_{G_1,p,q}^{0}$ with the Bernoulli percolation measure with parameter $$
f(p,q) = \frac{p}{p+q-pq}.$$
Since the event $\Gamma(e,n)$ is decreasing, by the comparison inequalities stated in chapter 3.4 of~\cite{FKgrimmett}, we have
$$\fk{0}{G_1,p,q}{\Gamma(e,n)}\leqslant \fk{0}{G_1,f(p,q),1}{\Gamma(e,n)}.
$$
By a standard Peierls argument, we have then
$$\fk{0}{G_1,f(p,q),1}{\Gamma(e,n)}\leqslant \alpha^n(d)(1-f(p,q))^n,
$$
where $\alpha(d)$ is the number of $*$-neighbours of an edge in dimension $d$.
We use this upper bound in the sum~\eqref{fk.sum_mkv} and we obtain
\begin{multline*}\fk{TB}{\Lambda}{\Gamma(e,n,T)}\leqslant \alpha^n(d)(1-f(p,q))^n\sum_{G_1:B\subset V(G_1)}\sum_{\eta:\mathcal{G}=G_1}\fk{TB}{\Lambda}{\omega_{G'} = \eta}\\
= \alpha^n(d)(1-f(p,q))^n\sum_{G_1:B\subset V(G_1)}\fk{TB}{\Lambda}{\mathcal{G} = G_1}.
\end{multline*}
We now calculate the last sum. We have
\begin{multline*}\sum_{G_1:B\subset V(G_1)}\fk{TB}{\Lambda}{\mathcal{G} = G_1}=
\mfk{TB}{\Lambda}{\begin{array}{c}\exists \mathcal{G} \text{ sub-graph of }\Lambda\\
V(\mathcal{G})\cap O(T) = \emptyset,\quad B\subset V(\mathcal{G})\end{array}}\\ = \fk{TB}{\Lambda}{O(T)\cap B = \emptyset}.
\end{multline*}
The event $\{O(T)\cap B = \emptyset\}$ implies the disconnection between $T$ and $B$, thus we have
$$\fk{TB}{\Lambda}{\Gamma(e,n,T)}\\
\leqslant \alpha^n(d)\left(1-f(p,q)\right)^n\fk{TB}{\Lambda}{T\nconnect B},
$$
which is the desired inequality.
\end{proof}
We now prove proposition~\ref{fk.distpivot} with the help of the previous lemma. 
\begin{proof}[Proof of proposition~\ref{fk.distpivot}]
Notice that every edge of a cut is connected to the set of the pivotal edges $\mathcal{P}$ or to the boundary of $\Lambda$ by a closed path. Let us fix an edge $e$ which belongs to a cut and which is at distance more than $\kappa c \ln|\Lambda|$ from $\mathcal{P}\cup\Lambda^c\setminus \{e\}$. There is a closed path starting from the edge $e$ and which is of length $\kappa c\ln|\Lambda|/4d$. This path is disjoint from a cut since there is no pivotal edge on this path. We refer to the section 3 of~\cite{new} for the detailed geometric justifications. The probability appearing in the proposition is less than
$$ \cfk{TB}{\Lambda}{A}{T\nconnect B},
$$ 
where $A$ is the event
$$A = \left\lbrace\begin{array}{c} \exists \gamma \text{ closed path of length }\frac{\kappa c\ln|\Lambda|}{4d} \text{ starting from }e\\
\exists C\in \mathcal{C}\quad C \text{ disjoint from }\gamma
\end{array}\right\rbrace.
$$
Since the existence of a cut implies the disconnection between $T$ and $B$, we can rewrite the conditioned probability as 
$$\cfk{TB}{\Lambda}{A}{T\nconnect B} = \frac{\fk{TB}{\Lambda}{A}}{\fk{TB}{\Lambda}{T\nconnect B}}.
$$
We distinguish two cases according to the positions of the path $\gamma$ and the cut $C$. Since the cut $C$ splits the box $\Lambda$ into two parts and the path $\gamma$ starting from $e$ is disjoint from the cut, then it is included in one of the two parts. Therefore, the path $\gamma$ is either separated from $T$ or from $B$ by $C$. We obtain
$$\fk{TB}{\Lambda}{A}\leqslant \fk{TB}{\Lambda}{\Gamma(e,T)}+\fk{TB}{\Lambda}{\Gamma(e,B)},
$$
where we reuse the notation $\Gamma$ introduced in the previous lemma by setting $n = \kappa c \ln|\Lambda|/4d$ and we omit $n$ in the notation as follows:
$$\Gamma(e,T) = \left\lbrace\begin{array}{c}\exists \gamma \text{ closed path of length }\frac{\kappa c\ln|\Lambda|}{4d} \text{ starting from }e\\
\exists C\in \mathcal{C}, C \text{ separates }\gamma\text{ from }T\end{array}\right\rbrace$$ and $\Gamma(e,B)$ is defined similarly, with $B$ instead of $T$.
By lemma \ref{fk.fbk}, we have 
$$\fk{TB}{\Lambda}{\Gamma(e,T)}\leqslant \big(\alpha(d)(1-f(p,q))\big)^{\kappa c\ln|\Lambda|/4d}\fk{TB}{\Lambda}{T\nconnect B}
$$
and the same holds for $\fk{TB}{\Lambda}{\Gamma(e,B)}$.
Thus, we have 
$$\frac{\fk{TB}{\Lambda}{A\circ\{\exists C\in\mathcal{C}\}}}{\fk{TB}{\Lambda}{T\nconnect B}}\leqslant 2\big(\alpha(d)(1-f(p,q))\big)^{\kappa c\ln|\Lambda|/4d}.
$$
For $q\geqslant 1$, there exist a $\tilde{p}<1$ and $\kappa\geqslant 1$, such that, for $p\geqslant \tilde{p}$ and $c\geqslant 1$, we have
$$\big(\alpha(d)(1-f(p,q))\big)^{\kappa c\ln|\Lambda|/4d}\leqslant  \frac{1}{2|\Lambda|^c},
$$
which yields the desired result.
\end{proof}
\section{The speed estimate of the pivotal edges}
We now study the difference between a cut at two different times $t$ and $t+s$. We begin with a key lemma which controls the number of closing events which can be realised on an interval of length $s$.
\begin{lem}\label{fk.fkfermeture}
Let $\Gamma$ be a simple $*$-path and $t$ a time and $s\in\mathbb{N}$. For any $k\in\{0, \dots,|\mathrm{support}(\Gamma)|\}$ and any configuration $y$ such that exactly $k$ edges of $\Gamma$ are closed, we have the following inequality:
$$P_\mu\left(\kern-5pt\begin{array}{c|c}\Gamma\text{ closed in }Y_{t+s} &Y_t = y\kern-5pt\end{array}\right)\\ \leqslant 
\left(\frac{s(1-p)}{|\Lambda|(1+p/q-p)}\right)^{|\mathrm{support}(\Gamma)|-k}.
$$
\end{lem}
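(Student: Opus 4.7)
Plan. Write $m = |\mathrm{support}(\Gamma)| - k$ and let $e_1, \dots, e_m$ denote the edges of $\Gamma$ that are open in the configuration $y$. On the event $\{\Gamma\text{ closed in }Y_{t+s}\} \cap \{Y_t = y\}$, each $e_i$ must transition from open to closed over the interval $(t,t+s]$, so the Gibbs sampler must select $e_i$ at least once during that interval and output the value $0$. For each $i$, introduce the stopping time
\[
\tau_i \;=\; \inf\{t' \in (t,t+s]\,:\, E_{t'} = e_i,\ Y_{t'}(e_i) = 0\},
\]
with $\inf \emptyset = +\infty$. Then $\{\Gamma\text{ closed in }Y_{t+s}\} \cap \{Y_t=y\} \subset \bigcap_{i=1}^m \{\tau_i < \infty\} \cap \{Y_t=y\}$.

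Per-step estimate. By minimality of $\tau_i$ and $Y_t(e_i) = 1$, no update of $e_i$ strictly before $\tau_i$ can close it, so in fact $Y_{\tau_i - 1}(e_i) = 1$. When $e_i$ is open in the current configuration $\omega = Y_{t'-1}$, one has $\omega^{e_i} = \omega$ and the chain already lies in $\{T \nconnect B\}$, so the rejection branch in the definition of $Y$ cannot fire at $e_i$. The conditional probability of closing $e_i$ upon update is then exactly the Gibbs ratio $\Phi(\omega_{e_i})/(\Phi(\omega^{e_i})+\Phi(\omega_{e_i}))$, which is bounded by $(1-p)/(1+p/q-p)$ through the FK/Bernoulli comparison of chapter 3.4 of \cite{FKgrimmett} already used in Lemma~\ref{fk.fbk}. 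Combining with the probability $1/|\Lambda|$ of picking $e_i$, and writing $\mathcal{F}_{t'-1}$ for the natural filtration of the coupling,
\[
P_\mu\bigl(\tau_i = t' \bigm| \mathcal{F}_{t'-1},\ Y_t = y\bigr) \;\leq\; \alpha \;:=\; \frac{1}{|\Lambda|}\cdot\frac{1-p}{1+p/q-p},
\qquad t' \in (t,t+s].
\]

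Aggregation over the $m$ edges. Set $Z_{t',i} = 1_{\tau_i = t'}$, so that $1_{\tau_i < \infty} = \sum_{t' \in (t,t+s]} Z_{t',i}$. Expanding the product of the $m$ indicators gives
\[
P_\mu\Bigl(\bigcap_{i=1}^m \{\tau_i < \infty\} \Bigm| Y_t = y\Bigr) \;=\; \sum_{(t'_1,\dots,t'_m) \in (t,t+s]^m} E_\mu\Bigl[\prod_{i=1}^m Z_{t'_i, i} \Bigm| Y_t = y\Bigr].
\]
Any tuple with two equal coordinates contributes $0$, because $E_{t'_i}$ is a single edge and the $e_i$ are distinct. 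For a tuple with distinct coordinates, order them as $t'_{i_1} < \cdots < t'_{i_m}$ and apply the tower property iteratively, using the per-step bound at $t'_{i_m}$, then $t'_{i_{m-1}}$, and so on, to get $E_\mu[\prod_j Z_{t'_{i_j}, i_j} \mid Y_t = y] \leq \alpha^m$. Since the number of ordered tuples of distinct entries in $(t,t+s]$ is at most $s^m$, the sum is at most $(s\alpha)^m$, which is the claimed bound.

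Main obstacle. The only delicate point is that, for a currently closed edge, the rejection branch can push the per-update closing probability up to $1$, so a product bound over arbitrary closing events would fail. Working with the \emph{first} closing time $\tau_i$ circumvents this issue: it forces $e_i$ to be open immediately before $\tau_i$, which is precisely the regime where the clean FK bound $(1-p)/(1+p/q-p)$ applies. Everything else is the bookkeeping aggregation above.
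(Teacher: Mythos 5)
Your proof is correct, and the core mechanism agrees with the paper's: the first time an edge of $\Gamma$ closes after $t$ it was open at the preceding step, so in the current configuration $\omega$ one has $\omega^{e}=\omega$ and the rejection branch of the $Y$-dynamics cannot fire; the closing probability is therefore exactly the Gibbs ratio $\Phi(\omega_{e})/(\Phi(\omega^{e})+\Phi(\omega_{e}))\leqslant (1-p)/(1-p+p/q)$. The paper is terse about why $U_{t_i}\leqslant (1-p)/(1-p+p/q)$ at the witnessing times; you make the first-closing-time logic explicit, which is a genuine clarification. Where you differ is in the aggregation. The paper replaces the dynamical event by the auxiliary product-space event
\[
\bigcap_{i}\Big\{\exists r\in(t,t+s]:\ E_r=e_i,\ U_r\leqslant \tfrac{1-p}{1-p+p/q}\Big\},
\]
observes that this depends only on $(E_r,U_r)_{r\in(t,t+s]}$ and is thus independent of $Y_t$, notes that the sub-events for distinct $e_i$ are determined by disjoint index sets $J(e_i)=\{r:E_r=e_i\}$, and invokes the BK inequality to factorize. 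You stay with the filtered dynamics directly: you expand $\prod_i\mathbf{1}_{\tau_i<\infty}$ over tuples of distinct update times, kill coincident times because the $e_i$ are distinct, and iterate the tower property from the largest time down using the per-step bound $P(\tau_i=t'\mid\mathcal{F}_{t'-1})\leqslant \alpha$. Your route is more elementary -- it avoids BK/Reimer entirely and keeps the conditional-probability structure visible throughout -- while the paper's is shorter once the reduction to the auxiliary space and the disjointness of the $J(e_i)$ are granted. Both arguments are sound and yield the same bound.
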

\begin{proof}
Let us denote by $n$ the cardinal of $\mathrm{support}(\Gamma)$. If an edge $e$ is visited, then it must be closed at time $t$ or become closed at a time after $t$. Therefore, there exist $n-k$ different edges $e_1,\dots,e_{n-k}$ of $\Gamma$, and $n-k$ different times $t_1,\dots,t_{n-k}$ strictly bigger than $t$ such that 
$$\forall i\in\{1,\dots,n-k\} \qquad E_{t_i} = e_i\quad \text{and}\quad U_{t_i}\leqslant \frac{1-p}{1-p+p/q}.$$ 
These events are independent of the configuration $Y_{t}$ and we obtain an upper bound on the desired probability as follows:
\begin{equation}P_\mu\left(\kern-5pt\begin{array}{c|c}\Gamma\text{ closed in }Y_{t+s} &Y_t = y\kern-5pt\end{array}\right)\\
 \leqslant P\left(\begin{array}{c}\forall i \in \{1,\dots,n-k\}\\
 \exists t_i\in ]t,t+s]\quad E_{t_i}=e_i\\
U_{t_i}\leqslant \frac{1-p}{1-p+p/q}
\end{array}\right).\label{fk.ineqstc}
\end{equation}
This last probability depends on the i.i.d. sequence of couples $(E_t,U_t)_{t\in N}$. Moreover, the sets
$$J(e) = \oset  s<j\leqslant t\,:\, E_j = e\cset,\quad  e\in \{e_1,\dots,e_{n-k}\}
$$
are pairwise disjoint subsets of $\mathbb{N}$. For an edge $e\in \{e_1,\dots,e_{n-k}\}$, the event 
$$\left\lbrace\begin{array}{c}\exists r\in ]s,t] \quad E_{r}=e\\
U_{r}\leqslant \frac{1-p}{1-p+p/q}
\end{array}\right\rbrace
$$
is entirely determined by $(E_t,U_t)_{t\in J(e)}$. Therefore, by the BK inequality (see \cite{grimmett1999percolation}) applied with the random variables $(E_r,U_r)_{s<r\leqslant t}$,
the second probability in~\eqref{fk.ineqstc} is less than
\begin{equation}\prod_{e\in\{e_1,\dots,e_{n-k}\}}P\left(\begin{array}{c}\exists r\in ]t,t+s]\quad E_{r}=e\\
U_{r}\leqslant \frac{1-p}{1-p+p/q}
\end{array}\right).\label{fk.pprode}
\end{equation}
For each $e\in\{e_1,\dots,e_{n-k}\}$, we have 
$$P\left(\begin{array}{c}\exists r\in ]t,t+s]\quad E_{r}=e\\
U_{r}\leqslant \frac{1-p}{1-p+p/q}
\end{array}\right)
\leqslant \frac{s(1-p)}{|\Lambda|(1-p+p/q)}.
$$
We conclude that \eqref{fk.pprode} is less or equal than
$$\left(\frac{s(1-p)}{|\Lambda|(1-p+p/q)}\right)^{n-k}.
$$
Combined with \eqref{fk.ineqstc}, we have the inequality stated in the lemma.
\end{proof}
In order to apply the previous result to control the speed of a cut, we will consider $s$ satisfying $0<s\leqslant |\Lambda|$ and we study the probability that there exists a pivotal edge $e$ at time $t+s$ which is at distance $\ell$ from the pivotal edges at time $t$. 
The following proposition gives an upper bound on the speed of a cut.
\begin{prop} \label{fk.vconst}
There exist $\tilde{p}<1$ and $\kappa \geqslant 1$, such that for $p\geqslant \tilde{p}$, $t\in \mathbb{N}$, $s\in \{1,\dots,|\Lambda|\}$, $\ell\geqslant 1$ and any edge $e\subset \Lambda$ at distance more than $\ell$ from $\Lambda^c$, 
$$
P_\mu\Big(e\in \mathcal{P}_{t+s} \,\big|\, \exists c_t \in \mathcal{C}_t,\, d(e,c_t)\geqslant\ell
\Big)\leqslant e^{-\ell}.
$$
\end{prop}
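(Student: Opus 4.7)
I propose a proof along the following lines, based on extracting a long closed $*$-path from $e$ at time $t+s$ whenever $e$ becomes pivotal, and bounding the probability of such a path via Lemma~\ref{fk.fkfermeture} together with the FK estimate of Lemma~\ref{fk.fbk}.

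\textbf{Geometric extraction.} Suppose $e\in\mathcal{P}_{t+s}$. Then $e$ belongs to some minimal cut $C_{t+s}\in\mathcal{C}_{t+s}$ of $Y_{t+s}$. Minimal cuts separating $T$ and $B$ are $*$-connected sets of closed edges which must reach $\Lambda^{c}$ (this is precisely the kind of geometric fact used in the proof of Lemma~\ref{fk.fbk}); since $d(e,\Lambda^{c})>\ell$, I can walk from $e$ along $*$-adjacent closed edges of $C_{t+s}$ to extract a simple $*$-path $\Gamma\subset C_{t+s}$ of length exactly $\ell$ starting at $e$, all of whose edges are closed in $Y_{t+s}$. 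The number of simple $*$-paths of length $\ell$ emanating from $e$ is bounded by $\alpha(d)^{\ell}$, so by a union bound it suffices to control, for each fixed $\Gamma$,
\begin{equation*}
P_{\mu}\bigl(\Gamma\text{ closed in }Y_{t+s},\ \exists c_{t}\in\mathcal{C}_{t}:\ d(e,c_{t})\geqslant\ell\bigr).
\end{equation*}

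\textbf{Bounding a fixed $\Gamma$.} For each $\Gamma$ I condition on $Y_{t}=y$ and apply Lemma~\ref{fk.fkfermeture} with $s\leqslant|\Lambda|$, which gives
\begin{equation*}
P_{\mu}\bigl(\Gamma\text{ closed in }Y_{t+s}\mid Y_{t}=y\bigr)\leqslant\Bigl(\tfrac{1-p}{1-p+p/q}\Bigr)^{\ell-k_{\Gamma}(y)},
\end{equation*}
where $k_{\Gamma}(y)$ is the number of edges of $\Gamma$ already closed in $y$. I then split according to $k_{\Gamma}(Y_{t})\leqslant\ell/2$ or $k_{\Gamma}(Y_{t})>\ell/2$. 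In the first regime the bound is already uniformly $((1-p)/(1-p+p/q))^{\ell/2}$. In the second regime, since $\Gamma\subset B(e,\ell)$, the far-cut hypothesis implies that $\Gamma$ lies entirely on one side of $c_{t}$ (say the $T$-side), so that any closed $*$-sub-path of $\Gamma$ in $Y_{t}$ is automatically separated from $B$ by the cut $c_{t}$. The rarity of such a configuration is controlled by Lemma~\ref{fk.fbk} applied to sub-paths of $\Gamma$: a careful two-layer accounting, revealing first the closed edges of $\Gamma$ at time $t$ (via Lemma~\ref{fk.fbk}) and then forcing the remaining $\ell-k$ open edges to close in the window $(t,t+s]$ (via Lemma~\ref{fk.fkfermeture}), yields a bound of the form $D(p,q,d)^{\ell}$ with $D<1$ for $p$ close enough to~$1$.

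\textbf{Conclusion.} Summing over the at most $\alpha(d)^{\ell}$ candidate paths $\Gamma$ gives an upper bound $(\alpha(d)D)^{\ell}$, which can be made smaller than $e^{-\ell}$ by choosing $\tilde{p}$ close enough to $1$. Finally one divides by the probability of the conditioning event, which is bounded below by the standard FK-comparison estimate applied to $\Phi^{\mathcal{D}}$, to recover the claimed conditional bound. The principal obstacle is the regime $k_{\Gamma}(Y_{t})>\ell/2$: one must combine the time-$t$ FK estimate (Lemma~\ref{fk.fbk}) with the dynamical estimate (Lemma~\ref{fk.fkfermeture}) without losing a combinatorial factor that would spoil the desired $e^{-\ell}$ decay. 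Passing cleanly from ``many edges of $\Gamma$ are closed at time $t$'' to a genuine closed $*$-sub-path separated from $B$ by $c_{t}$ requires a careful pigeonhole-type argument on the placement of closed edges along $\Gamma$, and this is the technical heart of the proof.
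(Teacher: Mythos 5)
Your overall structure — extract a $*$-path from $e$ at time $t+s$, bound the closing probability via Lemma~\ref{fk.fkfermeture}, split on the number of edges already closed at time $t$, and sum over paths — matches the skeleton of the paper's argument. However, there are two significant gaps.

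The most serious one is your final step: you propose to ``divide by the probability of the conditioning event, which is bounded below by the standard FK-comparison estimate.'' There is no uniform lower bound on $P_\mu\bigl(\exists c_t\in\mathcal{C}_t,\,d(e,c_t)\geqslant\ell\bigr)$: this quantity depends on $e$, $\ell$ and the geometry of $\Lambda$, and it can be exponentially small in $\ell$ (if $e$ sits where the cut usually is, demanding the cut stay far away is a rare event). A lower bound on the denominator by a constant would therefore be useless; the probability must instead appear as an exact multiplicative factor in the bound on the numerator, so that it \emph{cancels}. The paper obtains this cancellation via the spatial Markov property: the event ``$k$ edges of $\Gamma$ closed'' is measurable with respect to edges inside the ball $\Delta = B(e,\ell-1)$, while the far-cut event $D$ is measurable with respect to edges outside $\Delta$; conditioning on the outside configuration, comparing with Bernoulli percolation on $\Delta$, and resumming yields $\Phi^{TB}(\cdot\cap D)\leqslant\binom{\ell/2d}{k}(1-f(p,q))^k\,\Phi^{TB}(D)$, which contains precisely the factor that cancels the denominator. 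Nothing of this kind appears in your write-up.

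The second gap is your treatment of the regime $k_\Gamma(Y_t)>\ell/2$. You want to apply Lemma~\ref{fk.fbk} to a closed $*$-sub-path of $\Gamma$, but the closed edges of $\Gamma$ at time $t$ need not form a connected $*$-sub-path; you acknowledge that passing from ``many closed edges'' to ``a long closed sub-path'' needs a pigeonhole argument and call it the technical heart, but you do not supply it. The paper sidesteps this entirely: because it works with the event ``exactly $k$ specified edges of $\Gamma$ are closed'' (a decreasing event on $\Delta$) rather than with a sub-path, the Bernoulli comparison yields a binomial coefficient directly, and no connectivity argument is required. Moreover, even if one could extract a sub-path, Lemma~\ref{fk.fbk} produces the factor $\Phi^{TB}(T\nconnect B)$, not the factor $\Phi^{TB}(D)$ that is needed for the denominator to cancel, so this route leads into a dead end regardless.

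Finally, a minor point: to control the entropy of the sum over $\Gamma$ you must work with the support of the $*$-path (the paper trims the path so its support has exactly $\ell/2d$ edges), since the combinatorial factor $\alpha(d)^n$ must be compensated by a decay per \emph{distinct} edge; a path of length $\ell$ need not have $\ell$ distinct edges.
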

\noindent The proof follows the ideas in the proof of proposition~4.1 of~\cite{new}.
\begin{proof}
We start by rewriting the conditioned probability as
\begin{equation}\frac{P_\mu\Big(\{e\in \mathcal{P}_{t+s}\}\,\cap\,\{\exists c_t \in \mathcal{C}_t, d(e,c_t) \geqslant \ell \}\Big)}{P_\mu\Big(\exists c_t \in \mathcal{C}_t, d(e,c_t) \geqslant \ell\Big)}.\label{fk.quotien}
\end{equation}
Since a pivotal edge is in a cut, there exists a closed simple $*$-path $\gamma$ which connects $e$ to an edge intersecting the boundary of $\Lambda$ at time $t+s$. This $*$-path travels a distance at least $\ell$ and the length of $\gamma$ is at least $\ell/d$. Therefore, by taking the first $n$ edges such that the support of this sequence is equal to $\ell/2d$, the numerator is bounded from above by
\begin{multline}P_\mu\left(\kern-5pt\begin{array}{ccc}\left\lbrace\kern-5pt\begin{array}{c}\exists \gamma \text{ simple closed $*$-path of length}\\
 n \text{ starting at }e\text{ at time } t+s\end{array}\kern-5pt\right\rbrace\kern-5pt&\bigcap &\kern-5pt\left\lbrace\kern-5pt\begin{array}{c}
\exists c_t \in \mathcal{C}_t \\ d(e,c_t) \geqslant \ell
\kern-5pt\end{array}\right\rbrace\end{array}
\kern-5pt\right)\leqslant\\
\sum_{\Gamma}P_\mu\left(\kern-5pt\begin{array}{ccc}\left\lbrace\kern-5pt\begin{array}{c} \Gamma \text{ simple closed $*$-path of length}\\
 n \text{ starting at }e\text{ at time } t+s\end{array}\kern-5pt\right\rbrace\kern-5pt&\bigcap &\kern-5pt\left\lbrace\kern-5pt\begin{array}{c}
\exists c_t \in \mathcal{C}_t \\ d(e,c_t) \geqslant \ell
\kern-5pt\end{array}\right\rbrace\end{array}
\kern-5pt\right).\label{fk.pSTCcapCut}
\end{multline}
For a $*$-path $\Gamma$ of length $n$ starting from $e$, we consider the set $M(k)$ of the configurations defined as 
$$
M(k) = \left\lbrace \begin{array}{cl}\omega\,:&\begin{array}{l} \exists F\subset \mathrm{support}(\Gamma),\, |F| = k\\
\forall f \in F\quad \omega(f) = 0\\
\forall f \in \mathrm{support}(\Gamma)\setminus F\quad \omega(f) = 1\\
\exists C\in \mathcal{C} \quad d(e,C)\geqslant \ell\end{array}\end{array}\right\rbrace.
$$
For $k$ fixed, the probability of having exactly $k$ edges in $\Gamma$ which are closed at time $t$ is less than
$$\sum_{y\in M(k)}P_\mu\left( \begin{array}{c|c}\begin{array}{c}\Gamma \text{ simple $*$-path}\\
\text{ closed at time } t+s\end{array}& Y_t = y\end{array}\right)P_\mu\big(Y_t = y\big).
$$
By lemma~\ref{fk.fkfermeture}, each term of the sum is less than
$$\left(\frac{s(1-p)}{|\Lambda|(1+p/q-p)}\right)^{n-k}P_\mu\big(Y_t = y\big).
$$
We obtain an upper bound for~\eqref{fk.pSTCcapCut} as
$$\sum_{\Gamma}\sum_k\left(\frac{s(1-p)}{|\Lambda|(1+p/q-p)}\right)^{n-k}P_\mu\big(Y_t\in M(k)\big).
$$
Let us calculate the probability $P_\mu(Y_t\in M(k))$. Notice that this event is determined by the configuration $Y_t$. This probability is less than 
\begin{multline}\mfk{TB}{\Lambda}{\begin{array}{c|c}\begin{array}{c} \exists F\subset \mathrm{support}(\Gamma),\, |F| = k\\
\forall f \in F\quad f\text{ closed}\\
\exists C\in \mathcal{C} \quad d(e,C)\geqslant \ell\end{array}& T\nconnect B\end{array}} \\
= \frac{\mfk{TB}{\Lambda}{\begin{array}{c} \exists F\subset \mathrm{support}(\Gamma),\, |F| = k\\
\forall f \in F\quad f\text{ closed}\\
\exists C\in \mathcal{C} \quad d(e,C)\geqslant \ell\end{array}}}{\fk{TB}{\Lambda}{T\nconnect B}}.\label{fk.decomp}
\end{multline}
Denote by $\Delta$ the set of the edges at distance less than $\ell-1$ from $e$, then the event 
$$\left\lbrace\begin{array}{c} \exists F\subset \mathrm{support}(\Gamma),\, |F| = k\\
\forall f \in F\quad f\text{ closed}\end{array}\right\rbrace
$$
depends on the edges inside $\Delta$, whereas the event 
$$D = \oset \exists C\in \mathcal{C} \quad d(e,C)\geqslant \ell \cset
$$
depends on the edges in $\Lambda\setminus \Delta$. Denote by $\Omega_{\Lambda\setminus \Delta}$ the set of the configurations of the edges in $\Lambda\setminus \Delta$. By the spatial Markov property, we have
\begin{multline}\mfk{TB}{\Lambda}{\begin{array}{c} \exists F\subset \mathrm{support}(\Gamma),\, |F| = k\\
\forall f \in F\quad f\text{ closed}\\
\exists C\in \mathcal{C} \quad d(e,C)\geqslant \ell\end{array}} \\= \sum_{\omega\in \Omega_{\Lambda\setminus \Delta}\cap D}\mfk{TB}{\Lambda}{\begin{array}{c|c}\begin{array}{c} \exists F\subset \mathrm{support}(\Gamma),\, |F| = k\\
\forall f \in F\quad f\text{ closed}\end{array}& \,\omega\end{array}}\fk{TB}{\Lambda}{\omega}\\
 = \sum_{\omega\in \Omega_{\Lambda\setminus \Delta}\cap D}\mfk{\xi(\omega)}{\Delta}{\begin{array}{c} \exists F\subset \mathrm{support}(\Gamma),\, |F| = k\\
\forall f \in F\quad f\text{ closed}\end{array}}\fk{TB}{\Lambda}{\omega}.\label{fk.sumpMk}
\end{multline}
Since the event 
$$\left\lbrace\begin{array}{c} \exists F\subset \mathrm{support}(\Gamma),\, |F| = k\\
\forall f \in F\quad f\text{ closed}\end{array}\right\rbrace
$$
is decreasing, for any boundary condition $\xi(\omega)$ on $\Delta$ induced by $\omega$, we have 
$$\mfk{\pi(\omega)}{\Delta}{\begin{array}{c} \exists F\subset \mathrm{support}(\Gamma),\, |F| = k\\
\forall f \in F\quad f\text{ closed}\end{array}}\leqslant \mfk{0}{\Delta}{\begin{array}{c} \exists F\subset \mathrm{support}(\Gamma),\, |F| = k\\
\forall f \in F\quad f\text{ closed}\end{array}}.
$$
We compare this last probability to the one under Bernoulli percolation with parameter $$
f(p,q) = \frac{p}{p+q-pq}.
$$
By the comparison inequalities between different values of $p,q$, we have 
$$\mfk{0}{\Delta}{\begin{array}{c} \exists F\subset \mathrm{support}(\Gamma),\, |F| = k\\
\forall f \in F\quad f\text{ closed}\end{array}}\leqslant \binom{\ell/2d}{k} (1-f(p,q))^k.
$$
We use this inequality in \eqref{fk.sumpMk} and we obtain 
$$\mfk{TB}{\Lambda}{\begin{array}{c} \exists F\subset \mathrm{support}(\Gamma),\, |F| = k\\
\forall f \in F\quad f\text{ closed}\\
\exists C\in \mathcal{C} \quad d(e,C)\geqslant \ell\end{array}}\\ \leqslant \binom{\ell/2d}{k} (1-f(p,q))^k \fk{TB}{\Lambda}{D} .
$$
Combined with \eqref{fk.decomp}, we have 
$$P_\mu\big(Y_t\in M(k)\big)\leqslant \binom{\ell/2d}{k} (1-f(p,q))^k\cfk{TB}{\Lambda}{D}{T\nconnect B}.
$$
We replace $P_\mu\big(Y_t\in M(k)\big)$ in the previous upper bound for~\eqref{fk.pSTCcapCut} and we have another upper bound as follows:
$$\cfk{TB}{\Lambda}{D}{T\nconnect B}\sum_\Gamma\sum_k\left(\frac{s(1-p)}{|\Lambda|(1+p/q-p)}\right)^{n-k}\binom{\ell/2d}{k} (1-f(p,q))^k.
$$
We calculate the sum and we get
\begin{multline*}\sum_\Gamma\sum_k\left(\frac{s(1-p)}{|\Lambda|(1+p/q-p)}\right)^{n-k}\binom{\ell/2d}{k} (1-f(p,q))^k\\ 
\leqslant \left(\frac{1-p}{p/q}\right)^{n}\sum_\Gamma\sum_k \binom{\ell/2d}{k}\left(\frac{s}{|\Lambda|}\right)^{n-k}\\
\leqslant \sum_\Gamma\left(\frac{1-p}{p/q}\right)^{n}\left(1+\frac{s}{|\Lambda|}\right)^{\ell/2d}\leqslant \left(\frac{\alpha(d)(1-p)}{p/q}\right)^{n}\left(1+\frac{s}{|\Lambda|}\right)^{\ell/2d}.
\end{multline*}
Since $s\leqslant |\Lambda|$, there exists a constant $\tilde{p}<1$ such that 
$$\left(\frac{\alpha(d)(1-p)}{p/q}\right)^{n}\left(1+\frac{s}{|\Lambda|}\right)^{\ell/2d}\leqslant e^{-\ell}.
$$
We obtain the following upper bound for~\eqref{fk.quotien}:
$$\frac{P_\mu\Big(\{e\in \mathcal{P}_{t+s}\}\,\cap\,\{\exists c_t \in \mathcal{C}_t, d(e,c_t) \geqslant \ell \}\Big)}{P_\mu\Big(\exists c_t \in \mathcal{C}_t, d(e,c_t) \geqslant \ell\Big)}\leqslant e^{-\ell}\frac{\cfk{TB}{\Lambda}{D}{T\nconnect B}}{P_\mu\Big(\exists c_t \in \mathcal{C}_t, d(e,c_t) \geqslant \ell\Big)}.
$$
Finally, we notice that 
$$\cfk{TB}{\Lambda}{D}{T\nconnect B} = P_\mu\Big(\exists c_t \in \mathcal{C}_t, d(e,c_t) \geqslant \ell\Big)
$$
and we obtain the desired inequality.
\end{proof}
\section{The interface in the FK-percolation model}
We now prove the main result stated in theorem~\ref{fk.main1}. We follow the main steps of the proof of theorem 1.1 in \cite{new}. We start with the definition of $d^\ell_H$, a semi-distance, similar to the Hausdorff distance, between two subsets $A,B$ of $\Lambda$,
$$ d_H^\ell(A,B) = \inf\,\left\lbrace r\geqslant 0 \,:\begin{array}{c}A\setminus \mathcal{V}(\Lambda^c,\ell)\subset\mathcal{V}(B,r)\\ B\setminus \mathcal{V}(\Lambda^c,\ell)\subset\mathcal{V}(A,r)\end{array}\right\rbrace.
$$
We show a lemma which controls the speed of the pivotal edges. 
\begin{lem}\label{fk.dH}
We have the following result:
\begin{multline*}\exists \tilde{p}<1 \quad \exists \kappa>1 \quad \forall p\geqslant \tilde{p}\quad \forall c \geqslant 1\quad\forall \Lambda\quad |\Lambda|\geqslant 4\quad \forall t \geqslant 0 \\  P_\mu\Big(
\exists s\leqslant |\Lambda|\quad d^{\kappa c\ln|\Lambda|}_H(\mathcal{P}_t,\mathcal{P}_{t+s})\geqslant \kappa c\ln |\Lambda|
 \Big)\leqslant \frac{8d}{|\Lambda|^c}.
\end{multline*}
\end{lem}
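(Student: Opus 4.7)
The plan is a union bound over edges and times, combined with the reversibility of $(Y_t)$ and a chaining of Propositions~\ref{fk.vconst} and~\ref{fk.distpivot}. Set $\ell = \kappa c\ln|\Lambda|$ with $\kappa$ to be chosen. By the definition of $d_H^\ell$, whenever $d_H^\ell(\mathcal{P}_t,\mathcal{P}_{t+s})\geqslant \ell$, either (i) some edge $e\in \mathcal{P}_t$ satisfies $d(e,\Lambda^c)\geqslant \ell$ and $d(e,\mathcal{P}_{t+s})\geqslant \ell$, or (ii) the same holds with the roles of $t$ and $t+s$ exchanged. Since $(Y_t)$ is a Gibbs sampler started from its invariant measure $\Phi^{\mathcal D}$, the joint law of $(Y_t,Y_{t+s})$ is symmetric in its two coordinates; as $\mathcal{P}_t$ depends only on $Y_t$, cases (i) and (ii) have equal probability and I bound only (ii), paying an extra factor~$2$.

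The subtle point is that Proposition~\ref{fk.vconst} requires a \emph{cut} at time $t$ far from $e$, whereas case (ii) only asserts that the \emph{pivotal} edges are far, leaving a priori room for non-pivotal closed edges of a cut to be close to $e$. To close this gap I apply Proposition~\ref{fk.distpivot} to $Y_t$ (whose law is $\Phi^{TB}(\cdot\mid T\nconnect B)$): outside an event of $P_\mu$-probability at most $1/|\Lambda|^c$, every edge $f$ of every cut of $Y_t$ lies within $\kappa_0 c\ln|\Lambda|$ of $\mathcal{P}_t\cup\Lambda^c\setminus\{f\}$, where $\kappa_0$ denotes the constant common to the two propositions. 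On this good event, if $d(e,\mathcal{P}_t\cup\Lambda^c)\geqslant \ell$ with $\ell>\kappa_0 c\ln|\Lambda|$, then $e$ itself cannot belong to a cut (otherwise the localisation would force $d(e,\mathcal{P}_t\cup\Lambda^c)\leqslant \kappa_0 c\ln|\Lambda|<\ell$); a one-line triangle inequality then upgrades the hypothesis into $d(e,c_t)\geqslant (\kappa-\kappa_0)c\ln|\Lambda|$ for \emph{every} cut $c_t\in\mathcal{C}_t$. Proposition~\ref{fk.vconst} now applies with threshold $(\kappa-\kappa_0)c\ln|\Lambda|$ and contributes at most $|\Lambda|^{-(\kappa-\kappa_0)c}$ per pair $(e,s)$.

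Summing over the at most $d|\Lambda|$ edges of $\Lambda$ and the $|\Lambda|$ choices of $s$, and including the probability $1/|\Lambda|^c$ of the bad event together with the factor $2$ from (i)--(ii), the total bound is
\begin{equation*}
\frac{2}{|\Lambda|^c}+2d|\Lambda|^2\cdot |\Lambda|^{-(\kappa-\kappa_0)c}\;\leqslant\;\frac{8d}{|\Lambda|^c},
\end{equation*}
which is valid as soon as $\kappa \geqslant \kappa_0+3$, $c\geqslant 1$ and $|\Lambda|\geqslant 4$. The main obstacle is exactly the translation performed in the second paragraph: the Hausdorff statement is about pivotal edges, while the speed estimate Proposition~\ref{fk.vconst} needs a cut hypothesis, and it is Proposition~\ref{fk.distpivot} that bridges the two. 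Once this reduction is in hand, the remainder is a routine tracking of exponents in a geometric union bound.
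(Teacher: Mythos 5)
Your proof is correct and follows essentially the same route as the paper: split the Hausdorff event into two symmetric cases, equate them via reversibility of $(Y_t)$, use Proposition~\ref{fk.distpivot} at time $t$ to upgrade the "far from $\mathcal{P}_t$" hypothesis into a "far from every cut" hypothesis so that Proposition~\ref{fk.vconst} applies, and finish with a union bound over the $\leqslant d|\Lambda|^2$ pairs $(e,s)$. The paper's bookkeeping is slightly clumsier (it sums the Proposition~\ref{fk.distpivot} error over $(e,s)$ as well and then repairs the exponent by replacing $c$ with $c+2$), whereas you correctly factor that error out once, but this is a cosmetic difference only.
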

\begin{proof}
We fix $s\in \oset 1,\dots, |\Lambda|\cset$. By the definition of the semi-distance $d_H^\ell$, we have, for any $\kappa>1$,
\begin{multline}P_\mu\Big(d_H^{\kappa c\ln|\Lambda|}(\mathcal{P}_t,\mathcal{P}_{t+s})\geqslant \kappa c\ln |\Lambda|\Big)\leqslant \\
P_\mu\Big(\mathcal{P}_{t+s}\setminus \mathcal{V}(\Lambda^c,\kappa c\ln|\Lambda|)\nsubseteq \mathcal{V}(\mathcal{P}_t,\kappa c\ln|\Lambda|)\Big)\\+ P_\mu\Big(\mathcal{P}_t\setminus \mathcal{V}(\Lambda^c,\kappa c\ln|\Lambda|)\nsubseteq \mathcal{V}(\mathcal{P}_{t+s},\kappa c\ln|\Lambda|)\Big).\label{fk.pH}
\end{multline}
Since the two probabilities in the sum depend only on the process $Y$, which is reversible, they are in fact equal to each other. We shall estimate the first probability. By proposition~\ref{fk.vconst}, for any $\ell\geqslant 1$, we have
$$P_\mu\left(\begin{array}{c} e\in\mathcal{P}_{t+s}\\ \exists c_t\in \mathcal{C}_t\quad d(e,c_t)\geqslant \ell\end{array}\right) \leqslant e^{-\ell}.
$$
In order to replace $c_t$ by $\mathcal{P}_t$ in the last probability, we use proposition~\ref{fk.distpivot} for the configuration $Y_t$. We introduce another constant $\kappa'$ and we have 
$$P_\mu \left(\begin{array}{c}\exists C\in\mathcal{C}_t\quad\exists f\in C\\
 d(f,\Lambda^c\cup \mathcal{P}_t\setminus \{f\})\geqslant \kappa' c\ln|\Lambda|\end{array} \right)\leqslant \frac{1}{|\Lambda|^c}.
$$
Therefore, by distinguishing two cases for the configuration $Y_t$, we have
\begin{multline*}
P_\mu\big( e\in \mathcal{P}_{t+s}, d(e,\mathcal{P}_t)\geqslant \kappa c\ln|\Lambda|\big)\leqslant \\ P_\mu\left(\begin{array}{c}e\in \mathcal{P}_{t+s},\quad d(e,\mathcal{P}_t)\geqslant \kappa c\ln|\Lambda|,\\ \forall C\in \mathcal{C}_t\quad\forall f\in C\setminus\mathcal{V}(\Lambda^c,\kappa' c\ln|\Lambda|)\\
 d(f,\mathcal{P}_t)< \kappa' c\ln|\Lambda|
 \end{array}\right)\\
+P_\mu\Big( \exists C\in\mathcal{C}_t,\exists f\in C, d(f,\Lambda^c\cup \mathcal{P}_t\setminus \{f\})\geqslant \kappa' c\ln|\Lambda|\Big).
\end{multline*}
For $\kappa>\kappa'$, we have
\begin{multline*}
P_\mu\left(\begin{array}{c}e\in \mathcal{P}_{t+s}\quad d(e,\Lambda^c\cup\mathcal{P}_t)\geqslant \kappa c\ln|\Lambda|\\ \forall C\in \mathcal{C}_t\quad\forall f\in C\setminus\mathcal{V}(\Lambda^c,\kappa' c\ln|\Lambda|)\\
 d(f,\mathcal{P}_t)< \kappa' c\ln|\Lambda|\end{array}\right)\leqslant  \\
 P_\mu\left(\begin{array}{c} e\in\mathcal{P}_{t+s}\\ \exists c_t\in \mathcal{C}_t\quad d(e,c_t)\geqslant (\kappa-\kappa')c\ln|\Lambda|\end{array}\right) \leqslant \frac{1}{|\Lambda|^{(\kappa-\kappa')c}}.
\end{multline*}
We choose now $\kappa = \kappa'+1$, and we sum over $e$ in $\Lambda$ and $s\in\{1,\dots,|\Lambda|\}$ to get 
$$P_\mu\left(\begin{array}{c}
\exists s\leqslant |\Lambda|, \exists e\in \mathcal{P}_{t+s}\\ d(e,\Lambda^c\cup\mathcal{P}_t)\geqslant \kappa c\ln|\Lambda|
\end{array}  \right)\leqslant \frac{4d}{|\Lambda|^{c-2}}.
$$ 
This is the first probability in~\eqref{fk.pH} and we conclude that
$$P_\mu\left(\begin{array}{c}
\exists s\leqslant |\Lambda|\\d_H^\Lambda(\mathcal{P}_t,\mathcal{P}_{t+s})\geqslant \kappa c\ln |\Lambda|
\end{array}  \right)\leqslant \frac{8d}{|\Lambda|^{c-2}}.
$$ 
Finally, we replace $c$ by $c+2$ and we have 
$$P_\mu\left(\begin{array}{c}
\exists s\leqslant |\Lambda|\\d_H^\Lambda(\mathcal{P}_t,\mathcal{P}_{t+s})\geqslant 3\kappa c\ln |\Lambda|
\end{array}  \right)\leqslant \frac{8d}{|\Lambda|^{c}}.
$$
This is the desired inequality.
\end{proof}
The rest of the proof of theorem~\ref{fk.main1} is exactly the same as the proof of theorem 1.1~in~\cite{new} which relies essentially on lemma~\ref{fk.dH} in \cite{new} independently from the model. We distinguish the case $e\in\mathcal{P}_t$ and the case $e\in\mathcal{I}_t\setminus \mathcal{P}_t$. For the first case, we apply proposition~\ref{fk.distpivot}. For the second case, we notice that this configuration is due to a movement of pivotal edges of distance $\ln^2|\Lambda|$ during a time interval of order $|\Lambda|$. We then apply lemma~\ref{fk.dH} to prove that this event is unlikely.

\section{The interface in the Ising model}
We recall the definition of the two sets $\mathcal{P}_I$ and $\mathcal{I}_I$:
$$\mathcal{P}_I= \left\lbrace \langle x,y\rangle\in E\,:\,\begin{array}{c}
\sigma^D(x) = +1\quad  \sigma^D(y) = -1,\\
x\text{ connected to }T\text{ by a path of spin }+1\text{ in }\sigma^D,\\
y\text{ connected to }B\text{ by a path of spin }-1\text{ in }\sigma^D
\end{array}\right\rbrace.
$$
and
$$\mathcal{I}_I =\left\lbrace\langle x,y\rangle\in E\,: \,\begin{array}{c}\sigma^{+}(x) = \sigma^{+}(y)\\ \sigma^-(x)=\sigma^-(y)\\ \sigma^D(x)\neq\sigma^D(y)\end{array}\right\rbrace. 
$$
We now show our main result on the Ising model.
\begin{proof}[Proof of theorem \ref{fk.main2}]
We fix a constant $c$ and we let $\kappa$ be a constant which will be determined later.
Let us consider the FK configurations $(\omega,\omega')$ associated to $(\sigma^+,\sigma^-,\sigma^D)$. We consider the set of the pivotal edges $\mathcal{P}$ and the set of the interface edges $\mathcal{I}$ in the couple $(\omega,\omega')$. We claim that $\mathcal{P}\subset \mathcal{P}_I$. In fact, for an edge $\langle x,y\rangle\in \mathcal{P}$, one of the endpoint $x$ is connected to $T$ and the other $B$ is connected to $B$ in $\omega'$. Therefore, we have $\sigma^D(x) = 1$ and $\sigma^D(y) = -1$. The configuration $\omega$ dominates $\omega'$, so both of them are also connected to the boundary of $\Lambda$ in $\omega$. We conclude that $$\sigma^+(x) = \sigma^+(y) = 1 \text{ and }\sigma^-(x) = \sigma^-(y) = -1.$$ Let $e$ be an edge in $\mathcal{I}_I$. Since the endpoints of $e$ have different spins in $\sigma^D$, then the edge $e$ must be closed in $\omega'$ and its endpoints belong to two distinct open clusters of $\omega'$. We denote by $x,y$ the two endpoints of $e$ and by $C_x,C_y$ the two open clusters of $\omega'$ such that $x\in C_x$ and $y\in C_y$. The vertices $x$ and $y$ have the same spin in $\sigma^+$ and in $\sigma^-$, therefore one of them, as well as its cluster, has different spins in $\sigma^+$ and $\sigma^D$. Suppose it is $C_x$. Suppose that $e$ belongs to $\mathcal{I}_I\setminus \mathcal{P}_I$. We distinguish three cases as follows:
\begin{itemize}[leftmargin = 0.4cm]
\item If $x$ is connected to the boundary of $\Lambda$ in $\omega'$, then it is connected to the boundary in $\omega$ and the spin of $x$ is determined by the boundary condition in $\sigma^+$ and $\sigma^-$. We have $\sigma^+(x) = +$ and $\sigma^-(x) = -$. Since we have $e\in \mathcal{I}_I$, we have also $\sigma^+(y) = +$ and $\sigma^-(y) = -$. Since the difference of the spin between $\sigma^+$ and $\sigma^-$ can only be induced by the boundary, the vertex $y$ is also connected to the boundary in $\omega$. However, since the edge $e$ is not in $\mathcal{P}_I$, the vertex $y$ can not be connected to the boundary in $\omega'$. Therefore there is an edge $f\in\mathcal{I}$ included in the boundary of the $C_y$.
\item If $x$ is connected to the boundary in $\omega$ but not in $\omega'$. Then, $x$ is connected to an edge $f\in\mathcal{I}$. In other words, there exists an edge $f\in\mathcal{I}$ on the boundary of $C_x$.
\item If $x$ is not connected to the boundary in $\omega$, we have then $\sigma^+(x) = \sigma^-(x)$ and $x$ is not connected to the boundary of $\Lambda$ in $\omega'$. By construction of $\sigma^+$, $C_x$ is contained in an open cluster of $\omega$ which is the union of at least two different open clusters of $\omega'$. Therefore, the boundary of $C_x$ contains at least one edge of $\omega'$ in $\mathcal{I}$.
\end{itemize}
In all cases, at least one endpoint of the edge $e$ is included in an open cluster in $\omega'$ which contains an edge $f\in\mathcal{I}$ on the boundary.  Let us fix the edge $e$ at distance more than $\kappa c \ln^2|\Lambda|$ from $\Lambda^c$. 
We distinguish two cases according to the position of the edge $f\in \mathcal{I}$. Let $\kappa'$ be the constant given by theorem \ref{fk.main1}. The probability in the theorem is less than
\begin{multline}\mu_{p,2}\Big(\exists f \in \mathcal{I}, d(f,\Lambda^c\cup \mathcal{P})\geqslant \kappa' c^2\ln|\Lambda|\Big)\\+\mu_{p,2}\left(\begin{array}{c}\exists f\in \mathcal{I}, d(f,\Lambda^c\cup\mathcal{P})<\kappa' c^2\ln|\Lambda|\\
e \in \mathcal{I}_I,d(e,\Lambda^c\cup\mathcal{P}_I)\geqslant \kappa c^2 \ln^2|\Lambda|\\
\exists x \text{ endpoint of }e\text{ s.t. }\\
f\text{ is on the boundary of } C_x\\\text{ and }C_x\cap\partial\Lambda = \emptyset
\end{array}\right).\label{fk.pInear}
\end{multline}
By theorem \ref{fk.main1}, there exists a $p_1$ such that for $p\geqslant p_1$, we have 
\begin{equation}\mu_{p,2}\Big(\exists f \in \mathcal{I}, d(f,\Lambda^c\cup \mathcal{P})\geqslant \kappa' c^2\ln|\Lambda|\Big)\leqslant \frac{1}{|\Lambda|^c}.\label{fk.pIloin}
\end{equation}
We consider the second case where the edge $f$ is at distance less than $\kappa'c^2\ln^2|\Lambda|$ from $\Lambda^c\cup \mathcal{P}$. Notice that the cluster $C_x$ is of diameter at least $(\kappa-\kappa')c^2\ln^2|\Lambda|$. By taking $ \kappa/2\geqslant\kappa'$, the diameter of $C_x$ is at least $(\kappa c^2\ln^2|\Lambda|)/2$. So the second probability in~\eqref{fk.pInear} is less than 
$$\mu_{p,2}\left(\begin{array}{c}
\exists x\text{ endpoint of }e\quad \exists f\in\Lambda\\ C_x \text{ is of diameter at least} (\kappa c^2\ln^2|\Lambda|)/{2}
\\\text{ and }f\in\partial_e C_x\quad C_x\cap\partial\Lambda = \emptyset
\end{array}\right).
$$
We fix an edge $f\in\Lambda$ and we write 
\begin{multline}\mu_{p,2}\left(\begin{array}{c}
\exists x\text{ endpoint of }e\quad \exists f\in\Lambda\\ C_x \text{ is of diameter at least} (\kappa c^2\ln^2|\Lambda|)/{2}
\\\text{ and }f\in\partial_e C_x\quad C_x\cap\partial\Lambda = \emptyset
\end{array}\right)\\
\leqslant \sum_{f\in\Lambda}\frac{\mfk{TB}{\Lambda,p,2}{\begin{array}{c}\exists C_x \text{ of diameter at least }(\kappa c^2\ln^2|\Lambda|)/{2}\\
f\in\partial_eC_x\quad C_x\cap\partial\Lambda = \emptyset\quad T\nconnect B\end{array}}}{\fk{TB}{\Lambda,p,2}{T\nconnect B}}.\label{fk.quofclose}
\end{multline}
In order to estimate the numerator, we fix the set $\partial_eC_x$ and we denote by $n$ the cardinal of $C_x$. We claim that there exists a set of edges $A$ included in $\partial_e C_x$ which is disjoint from a cut and which is of size at least $n/2$. Let us consider the case where every cut $C$ intersects $\partial_eC_x$ (the claim is true in case where there exists a cut which is disjoint from $\partial_eC_x$). Since $\partial_eC_x$ is the outer edge boundary of an open cluster, it does not contain a pivotal edge.
We apply again the exploration process described in lemma \ref{fk.fbk} starting from $T$ and we reuse the notation $\mathcal{G}$ defined in the proof of the lemma. If the sub-graph $\mathcal{G}$ obtained after exploration contains at least $n/2$ edges of $\partial_e C_x$, then the set $\mathcal{A}_1=E(\mathcal{G})\cap \partial_eC_x$ is disjoint from a cut and is of size at least $n/2$. If not, using the same exploration starting from $B$, we obtain a sub-graph $\mathcal{G}'$ such that 
$$\big(\partial_e C_x\setminus E(\mathcal{G})\big)\subset E(\mathcal{G}').$$
Actually, an edge of the set $\partial_e C_x\setminus E(\mathcal{G})$ contains at least one endpoint connected to $T$, since there are no pivotal edges in $\partial_eC_x$, it is not connected to $B$. By the construction of $\mathcal{G}'$, this edge is contained in $\mathcal{G}'$. Therefore, $\mathcal{A}_2 = \partial_e C_x\cap\mathcal{E}'$ contains at least $n/2$ and is disjoint from a cut 
(see figure \ref{fk.fig:cutCx}).

\begin{figure}[ht]
\centering{
\resizebox{100mm}{!}{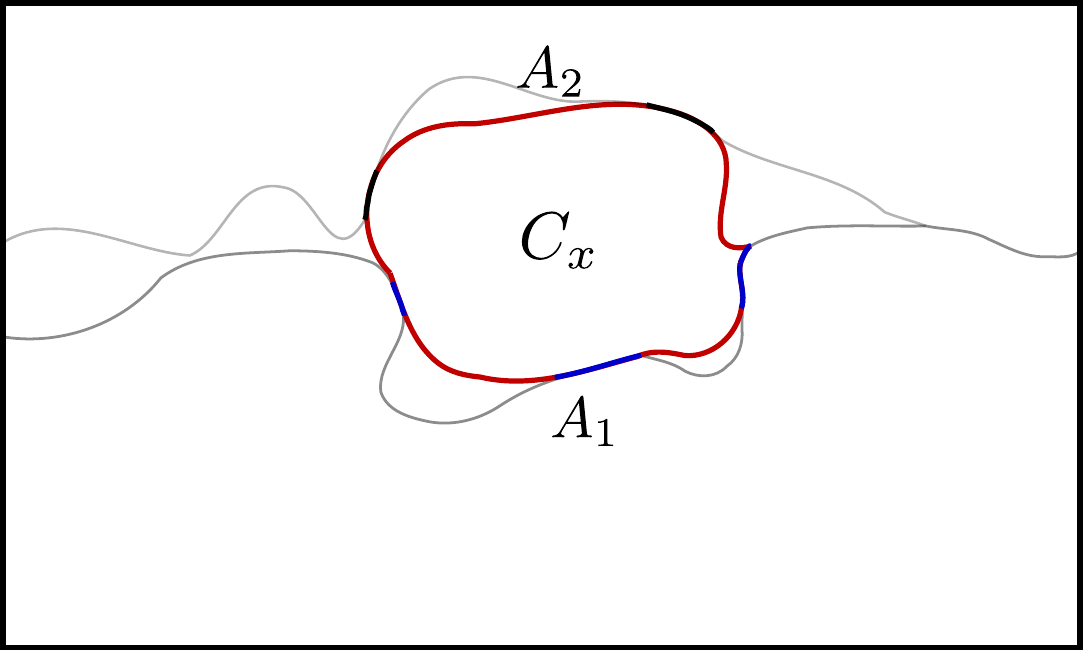}
\caption{The set $\partial_eC_x$ (red, blue and black) and two cuts obtained after the exploration process from $T$ and $B$ (gray). The sets $A_1$ (red and blue) and $A_2$ (red and black) are both disjoint from a cut.}
\label{fk.fig:cutCx}
}
\end{figure} 

For two graphs $G_1$ containing the edges intersecting $B$ and $G_2$ containing the edges intersecting $T$, we set $A_1 = E(G_1)\cap \partial_eC_x$ and $A_2 = E(G_2)\cap \partial_eC_x$. We have
\begin{multline}\mfk{TB}{\Lambda,p,2}{\begin{array}{c}
\forall f \in \partial_e C_x\quad f \text{ is closed}\\
|\partial_eC_x| = n\quad T\nconnect B
\end{array}}\\
\leqslant \sum_{G_1}\mfk{TB}{\Lambda,p,2}{\begin{array}{c}
\forall f \in A_1\quad f\text{ is closed},\\
|A_1| \geqslant n/2,\quad \mathcal{G}= G_1
\end{array}}\\+\sum_{G_2}\mfk{TB}{\Lambda,p,2}{\begin{array}{c}
\forall f \in A_2\quad f\text{ is closed},\\|A_2| \geqslant n/2,\quad\mathcal{G}'= G_2
\end{array}}\\+\mfk{TB}{\Lambda,p,2}{\begin{array}{c}\forall f \in \partial_eC_x \quad f\text{ is closed}\\
|\partial_eC_x| = n\\
\exists C\in\mathcal{C}\quad C\cap \partial_eC_x = \emptyset \end{array}}.\label{fk.sumcluster}
\end{multline}
By the spatial Markov property and the comparison inequality used in the proof of lemma \ref{fk.fbk}. We have
$$\sum_{G_1}\Phi^{TB}_{\Lambda,p,2}\left(\begin{array}{c}
\forall f \in A_1\quad f\text{ is closed},\\
|A_1| \geqslant n/2,\quad \mathcal{G}= G_1
\end{array}\right)
\\ \leqslant (1-f(p,2))^{n/2}\sum_{G_1}\fk{TB}{\Lambda,p,2}{\mathcal{G}=G_1}.
$$
The same goes for the second sum of \eqref{fk.sumcluster}.
The last sum is less than $\fk{TB}{\Lambda,p,2}{T\nconnect B}$.
By lemma \ref{fk.fbk}, last term in \eqref{fk.sumcluster} is less than 
$$2(1-f(p,2))^{n}\fk{TB}{\Lambda,p,2}{T\nconnect B}.
$$
We obtain an upper bound for \eqref{fk.sumcluster} as follows:
$$\mfk{TB}{\Lambda,p,2}{\begin{array}{c}
\forall f \in \partial_e C_x\quad f \text{ is closed}\\
|\partial_eC_x| = n\quad T\nconnect B
\end{array}}\leqslant 4(1-f(p,2))^{n}\fk{TB}{\Lambda,p,2}{T\nconnect B}.
$$
The set $\partial_eC_x$ is $*$-connected (see \cite{Pisztora1996Surface}). For a fixed edge $f$, the number of the $*$-connected sets $\partial_eC_x$ of size $n$ containing the edge $f$ is bounded from above by
$C_n\alpha(d)^n,$
where 
$$C_n = \frac{1}{n+1}\binom{2n}{n}$$ is the nth number Catalan number. Using Stirling formula, an upper bound of the number of $\partial_eC_x$ is $4^n\alpha(d)^n$. We would like to mention the arguments of \cite{MR876084}[p.82] and \cite{grimmett1999percolation}[theorem 4.20] for an upper bound of lattice animals in $\mathbb{Z}^d$.
We obtain therefore
\begin{multline*}\mfk{TB}{\Lambda,p,2}{\begin{array}{c} \exists C_x \text{ of diameter at least }(\kappa c^2\ln^2|\Lambda|)/2 \\
f\in\partial_eC_x \quad \partial_eC_x\cap \partial\Lambda = \emptyset\quad T\nconnect B
\end{array}}\\ 
\leqslant \sum_{n\geqslant (\kappa c^2\ln^2|\Lambda|)/2 }4^{n+1}\alpha(d)^n(1-f(p,2))^{n/2}\fk{TB}{\Lambda,p,2}{T\nconnect B}\\
\leqslant \big(8\alpha^2(d)-8\alpha^2(d)f(p,2)\big)^{\kappa c^2\ln^2|\Lambda|)/4}\fk{TB}{\Lambda,p,2}{T\nconnect B}.
\end{multline*}
We sum over the choices of the edge $f$ and we obtain an upper bound for \eqref{fk.quofclose} as follows:
\begin{multline*}\mu_{p,2}\left(\begin{array}{c}
\exists x\text{ endpoint of }e\quad \exists f\in\Lambda\\ C_x \text{ is of diameter at least} (\kappa c^2\ln^2|\Lambda|)/{2}
\\\text{ and }f\in\partial_e C_x\quad C_x\cap\partial\Lambda = \emptyset
\end{array}\right)\\\leqslant |\Lambda|\big(8\alpha^2(d)-8\alpha^2(d)f(p,q)\big)^{(\kappa c^2\ln^2|\Lambda|)/4}
\end{multline*}
There exist $p_2<1$ and $\kappa\geqslant 1$ such that for $p\geqslant p_2$,
$$|\Lambda|\big(8\alpha^2(d)-8\alpha^2(d)f(p,q)\big)^{(\kappa c^2\ln^2|\Lambda|)/4}\leqslant \frac{1}{|\Lambda|^c}.
$$
Combined with  \eqref{fk.pInear}, \eqref{fk.pIloin}, for 
$p\geqslant \max(p_1,p_2)$, we have 
$$\pi_{\beta}\Big(e \in \mathcal{I}_I,d(e,\Lambda^c\cup\mathcal{P}_I)\geqslant \kappa c^2 \ln^2|\Lambda|\Big)\leqslant \frac{2}{|\Lambda|^c}.
$$
We then sum over the edges $e$ in $\Lambda$ and we get
$$\pi_{\beta}\Big(\exists e \in \mathcal{I}_I,d(e,\Lambda^c\cup \mathcal{P}_I)\geqslant \kappa c^2 \ln^2|\Lambda|\Big)\leqslant \frac{2}{|\Lambda|^{c-1}}.
$$
For $|\Lambda|\geqslant 4$, we can replace $2/|\Lambda|^{c-1}$ by $1/|\Lambda|^{c-2}$. By taking $\kappa$ big enough, we can replace $c-2$ by $c$ and we obtain the result announced in the theorem.
\end{proof}
\section{Proof of the theorem \ref{fk.main3}}
\begin{proof}
By symmetry, it is sufficient to show the first inequality. The proof follows the same arguments for the proof of theorem \ref{fk.main2} and we use the same notations as in the previous proof. We fix a vertex $x$ and we notice that $x$ has different spins only when it is connected to the boundary or it is contained in an open cluster $C_1$ in $\omega$ which is the union of at least two open clusters of $\omega'$. Actually, if $x$ is connected to $T$ in $\omega$, since $\sigma^D(x) = -1$, it is not connected to $T$ in $\omega'$. If $x$ is connected to $B$ in $\omega$, since the cut separates $x$ from $B$ in $\omega'$, it can not be connected to $B$ in $\omega'$. If $x$ is not connected to the boundary of the box, since $\sigma^+(x)\neq \sigma^D(x)$, the open cluster $C_x$ in $\omega$ is the union of at least two open clusters of $\omega'$. In all the three cases, the open cluster of $C_x$ in $\omega'$ contains an edge $f\in\mathcal{I}$ and $C_x$ does not touch the boundary of $\Lambda$. We obtain 
\begin{multline*}\pi_{\beta}\left(\begin{array}{c}
\sigma^+(x) = +1,\quad \sigma^D(x) = -1\\
\exists C \text{ a cut separating }x\text{ from }B\\
d(x,C)\geqslant \kappa c^2\ln^2|\Lambda|
\end{array}\right)\\
\leqslant \mu_{p,2}\left(\begin{array}{c}\exists f\in \mathcal{I} \quad f\in \partial_eC_x\\
\exists C \in\mathcal{C}\quad  d(x,C)\geqslant \kappa c^2\ln^2|\Lambda|\\C_x\cap \partial\Lambda = \emptyset
\end{array}\right).
\end{multline*}
The rest of the proof follows exactly the same arguments as in the previous proof. We distinguish two cases, if there is an edge $f\in\mathcal{I}$ far from the cut $C$, we apply the theorem \ref{fk.main1}. If all the edges of $\mathcal{I}$ are close to the cut, the cluster $C_x$ has a diameter at least $\kappa c^2\ln^2|\Lambda|/2$, the same reasoning starting from \eqref{fk.quofclose} can be applied to obtain an upper bound in this case. Combining the two cases and we have 
$$\pi_{\beta}\left(\begin{array}{c}
\sigma^+(x) = +1,\quad \sigma^D(x) = -1\\
\exists C \text{ a cut separating }x\text{ from }B\\
d(x,C)\geqslant \kappa c^2\ln^2|\Lambda|
\end{array}\right)\leqslant \frac{1}{|\Lambda|^c}.
$$
We sum over the choices of $x$ and we have 
$$\pi_{\beta}\left(\begin{array}{c}\exists x \in\Lambda\quad
\sigma^+(x) = +1,\quad \sigma^D(x) = -1\\
\exists C \text{ a cut separating }x\text{ from }B\\
d(x,C)\geqslant \kappa c^2\ln^2|\Lambda|
\end{array}\right)\leqslant \frac{1}{|\Lambda|^{c-1}}.
$$
We can change $c$ by $c+1$ and we obtain the desired result.
\end{proof}

\bibliographystyle{halpha}
\bibliography{ln2fk}

\begin{thebibliography}{DKS92}

\bibitem[CZ18]{new}
Raphaël Cerf and Wei Zhou.
\newblock A new look at the interfaces in percolation, 2018, arXiv:1806.08576.

\bibitem[DH97]{DH}
R.~Dobrushin and O.~Hryniv.
\newblock Fluctuations of the phase boundary in the 2d ising ferromagnet.
\newblock {\em Communications in Mathematical Physics}, 189(2):395--445, Nov
  1997.

\bibitem[DKS92]{MR1181197}
R.~Dobrushin, R.~Koteck\'y, and S.~Shlosman.
\newblock {\em Wulff construction}, volume 104 of {\em Translations of
  Mathematical Monographs}.
\newblock American Mathematical Society, Providence, RI, 1992.
\newblock A global shape from local interaction, Translated from the Russian by
  the authors.

\bibitem[Dob72]{D72Gibbs}
R.~L. Dobru\v{s}in.
\newblock The {G}ibbs state that describes the coexistence of phases for a
  three-dimensional {I}sing model.
\newblock {\em Teor. Verojatnost. i Primenen.}, 17:619--639, 1972.

\bibitem[DP96]{Pisztora1996Surface}
Jean-Dominique Deuschel and Agoston Pisztora.
\newblock Surface order large deviations for high-density percolation.
\newblock {\em Probab. Theory Related Fields}, 104(4):467--482, 1996.

\bibitem[GL19]{1901.04980}
Reza Gheissari and Eyal Lubetzky.
\newblock Maximum and shape of interfaces in 3d ising crystals, 2019,
  arXiv:1901.04980.

\bibitem[Gri95]{MR1379156}
Geoffrey Grimmett.
\newblock The stochastic random-cluster process and the uniqueness of
  random-cluster measures.
\newblock {\em Ann. Probab.}, 23(4):1461--1510, 1995.

\bibitem[Gri99]{grimmett1999percolation}
Geoffrey Grimmett.
\newblock {\em {Percolation}}.
\newblock {Die Grundlehren der mathematischen Wissenschaften in
  Einzeldarstellungen}. Springer, 1999.

\bibitem[Gri06]{FKgrimmett}
Geoffrey Grimmett.
\newblock {\em The random-cluster model}.
\newblock Grundlehren der Mathematischen Wissenschaften [Fundamental Principles
  of Mathematical Sciences]. Springer-Verlag, Berlin, 2006.

\bibitem[IV18]{IV18}
Dmitry Ioffe and Yvan Velenik.
\newblock Low temperature interfaces: Prewetting, layering, faceting and
  ferrari-spohn diffusions.
\newblock {\em Mark. Proc. Rel. Fields}, 24(1):487--537, 2018.

\bibitem[Kel11]{Kelly:2011:RSN:2025239}
F.~P. Kelly.
\newblock {\em {Reversibility and Stochastic Networks}}.
\newblock Cambridge University Press, New York, NY, USA, 2011.

\bibitem[Kes86]{MR876084}
Harry Kesten.
\newblock Aspects of first passage percolation.
\newblock In {\em \'Ecole d'\'et\'e de probabilit\'es de {S}aint-{F}lour,
  {XIV}---1984}, volume 1180 of {\em Lecture Notes in Math.}, pages 125--264.
  Springer, Berlin, 1986.

\end{thebibliography}
\end{document}